\newcommand{\BestConvexLowerApproximations}[2]{{\cal C}^{#1}_{#2}}
\newcommand{\BestConvexLowerHomogeneous}[2]{{\cal H}^{#1}_{#2}}
\newcommand{\quasinorm}{\nu}
\newcommand{\supportFunction}{{\sigma}}
\def\figures{./}
\title{Best Convex Lower Approximations\\
  of the $l_0$ Pseudonorm  on Unit Balls}
\author{Thomas Bittar and Jean-Philippe Chancelier and Michel De Lara, \\ 
  CERMICS, \'Ecole des Ponts, Marne-la-Vall\'ee, France}
\begin{document}

\maketitle

\begin{abstract}
  Whereas the norm of a vector measures amplitude (and is a 1-homogeneous
  function), sparsity is measured by the 0-homogeneous l0 pseudonorm, which 
  counts the number of nonzero components.
  We propose a family of conjugacies suitable for the analysis of 0-homogeneous
  functions. These conjugacies are derived from couplings between vectors, given
  by their scalar product divided by a 1-homogeneous normalizing factor. 
  With this, we characterize the best convex lower approximation of a 0-homogeneous function on
  the unit ``ball'' of a normalization function (i.e. a norm without the
  requirement of subadditivity). We do the same with the best convex and 1-homogeneous
  lower approximation.  In particular, we provide expressions for the tightest convex lower
  approximation of the l0 pseudonorm on any unit ball, and we show that the
  tightest norm which minorizes the l0 pseudonorm on the unit ball of any
  lp-norm is the l1-norm. We also provide the tightest convex lower convex
  approximation of the l0 pseudonorm on the unit ball of any norm. 
\end{abstract}

{{\bf Key words}: l0 pseudonorm, convexity, Capra conjugacy,
  generalized top-$k$ and $k$-support norms, sparsity inducing norm.}


{{\bf AMS classification}: 46N10, 49N15, 46B99, 52A41, 90C46}



\section{Introduction}

The \emph{counting function}, also called \emph{cardinality function}
or \emph{\lzeropseudonorm}, 
counts the number of nonzero components of a vector in~$\RR^d$.
It is used in sparse optimization, either as objective function or in the
constraints, to obtain solutions with few nonzero entries.
However, the mathematical expression of the \lzeropseudonorm\
makes it difficult to handle as such in optimization problems on~$\RR^d$.
This is why most of the literature on sparse optimization
resorts to \emph{substitute} or \emph{surrogate} problems, 
obtained either from \emph{estimates} (inequalities) for the \lzeropseudonorm,
or from \emph{alternative} sparsity-inducing terms
(especially suitable norms) \cite{Argyriou-Foygel-Srebro:2012}.
We follow this approach, but using (and extending) the so-called
\Capra\ (Constant Along Primal RAys) couplings and conjugacies introduced
in~\cite{Chancelier-DeLara:2020_CAPRA_OPTIMIZATION,Chancelier-DeLara:2021_ECAPRA_JCA}. 

The paper is organized as follows.
In Section~\ref{sec:capra}, we introduce a new \Capra-coupling that extends the
definition in~\cite{Chancelier-DeLara:2020_CAPRA_OPTIMIZATION}.
We establish expressions for \Capra-conjugates 
and \Capra-subdifferentials of $0$-homogeneous functions. 
Then, in \S\ref{subsec:cvx_appr_0_hom}, we manage to obtain convex lower
bounds for general 0-homogeneous functions, thanks to this new coupling.
We specialize these results for the
\lzeropseudonorm \ in~\S\ref{subsec:appli_l0} and we obtain, in particular,
that the tightest norm below $\lzero$ on any $\ell_{p}$-unit ball ($p\in
[1,\infty]$) is the $\ell_{1}$-norm, hence justifying the use of the
$\ell_{1}$-norm as a sparsity-inducing term.
We also provide the tightest convex lower convex
approximation of the l0 pseudonorm on the unit ball of any norm.

\section{\Capra-conjugacies for 0-homogeneous functions}
\label{sec:capra}

In~\S\ref{subsec:def}, we recall definitions related to homogeneous functions on~$\RR^d$,
and we introduce a new \Capra-coupling between vectors of~$\RR^d$. This \Capra\
coupling is suited for the analysis of 0-homogeneous functions, for which we
provide the expression of \Capra-conjugates and \Capra-subdifferential
in~\S\ref{subsec:capra_conj}. 

We work on the Euclidean space~$\RR^d$
(where~$d$ is a positive integer), equipped with the scalar product 
\( \proscal{\cdot}{\cdot} \). 
We use the notation \( \ic{j,k}=\na{j, j+1,\ldots,k-1,k} \) for any pair of
integers such that \( j \leq k \). 
As we manipulate functions with values 
in~$\barRR = [-\infty,+\infty] $,
we adopt the Moreau \emph{lower and upper additions} \cite{Moreau:1970} 
that extend the usual addition with 
\( \np{+\infty} \LowPlus \np{-\infty} = \np{-\infty} \LowPlus \np{+\infty} =
-\infty \) or with
\( \np{+\infty} \UppPlus \np{-\infty} = \np{-\infty} \UppPlus \np{+\infty} =
+\infty \).
For any subset \( \Uncertain \subset \RR^d \), 
$\delta_{\Uncertain} : \RR^d \to \barRR $ denotes the \emph{characteristic function} of the
set~$\Uncertain$: 
\( \delta_{\Uncertain}\np{\uncertain} = 0 \) if \( \uncertain \in \Uncertain \),
and \( \delta_{\Uncertain}\np{\uncertain} = +\infty \) 
if \( \uncertain \not\in \Uncertain \).

\subsection{Definitions}
\label{subsec:def}

\begin{definition}
  We say that a function~$\fonctionprimal : \RR^d \to \barRR$
  is
  \begin{subequations}
    \begin{enumerate}[label=\emph{(\ref{de:homogeneous-divers}{\alph*})}, ref=\ref{de:homogeneous-divers}{\alph*},topsep=\parsep,leftmargin=1cm]
    \item
      \label{eq:function_is_0-homogeneous}
      \emph{0-homogeneous} if
      \(
      \fonctionprimal\np{\rho\primal} = \fonctionprimal\np{\primal} 
      \eqsepv \forall \rho \in \RR\setminus\{0\}
      \eqsepv \forall \primal \in \RR^d
      \eqfinv
      \)
    \item \label{eq:function_is_positively_1-homogeneous}
      \emph{positively 1-homogeneous} if
      \(
      \fonctionprimal\np{\rho\primal} = \rho \fonctionprimal\np{\primal} 
      \eqsepv \forall \rho \in \RR_+\setminus\{0\} 
      \eqsepv \forall \primal \in \RR^d
      \eqfinv
      \)
    \item
      \label{eq:function_is_absolutely_1-homogeneous}
      \emph{absolutely 1-homogeneous} if
      \(
      \fonctionprimal\np{\rho\primal} = \abs{\rho} \fonctionprimal\np{\primal} 
      \eqsepv \forall \rho \in \RR\setminus\{0\}
      \eqsepv \forall \primal \in \RR^d
      \eqfinp
      \)
    \end{enumerate}
  \end{subequations}
  \label{de:homogeneous-divers}
\end{definition}

\begin{example}

  An example of 0-homogeneous function is the \emph{pseudonorm} 
  \( \lzero : \RR^d \to \ic{0,d} \)
  defined by
  \begin{equation}
    \lzero\np{\primal} 
    = \textrm{number of nonzero components of } \primal
    \eqsepv
    \forall \primal \in \RR^d
    \eqfinp
    \label{eq:pseudo_norm_l0}  
  \end{equation}
  For any $p\in ]0,\infty[$, we define
  \( \ell_p\np{\primal}  = \bp{\sum_{i=1}^d
    |\primal_i|^p}^{\frac{1}{p}}\),
  as well as \( \ell_\infty\np{\primal} = \max_{i\in\ic{1,d}} |\primal_i|\). 
  All these functions $\ell_p$ are absolutely 1-homogeneous (and therefore also
  positively 1-homogeneous). When $p\in [1,\infty]$, \( \ell_p\) is a convex
  function which is the well-known $\ell_p$-norm~$\norm{\cdot}_{p}$. For $p \in
  ]0, 1[$, $\ell_{p}$ is not convex anymore and is only a \emph{normalization
    function} (see below) as it lacks the subadditivity property.
\end{example}

\begin{definition}
  \label{de:quasinorm}
  A function $\quasinorm : \RR^{d} \to \RR_{+}$ is said to be a
  \emph{normalization function} if it is a nonnegative, absolutely 1-homogeneous
  and such that, for any $\primal \in \RR^{d}$, we have that $\quasinorm\np{\primal} = 0$ if and only if $\primal = 0$.
  We introduce the subsets, abusively called unit ``sphere'' and ``ball'', 
  \begin{equation}
    \TripleNormSphere_{\quasinorm}= 
    \defset{\primal \in \RR^d}{\quasinorm\np{\primal} = 1} 
    \eqsepv
    \TripleNormSphere_{\quasinorm}^{\np{0}}= 
    \TripleNormSphere_{\quasinorm} \cup \na{0}
    \eqsepv
    \TripleNormBall_{\quasinorm} = 
    \defset{\primal \in \RR^d}{\quasinorm\np{\primal} \leq 1} 
    \eqfinv
    \label{eq:quasinorm_unit_sphere}
  \end{equation}
  and the primal \emph{normalization mapping}\footnote{%
    We distinguish the normalization \emph{function} with codomain~$\RR_{+}$ from
    the normalization \emph{mapping} with codomain~$
    \TripleNormSphere_{\quasinorm}^{\np{0}}$. Indeed, 
    adopting usage in mathematics, we follow Serge Lang and use ''function'' only to
    refer to mappings in which the codomain is a set of numbers (i.e. a subset
    of~$\RR$ or $\CC$), and reserve the term mapping for more general codomains.}~$\normalized_{\quasinorm}: \RR^d \to
  \TripleNormSphere_{\quasinorm}^{\np{0}}$ 
  by 
  \begin{equation}
    \normalized_{\quasinorm} : \primal \in \RR^d
    \mapsto
    \begin{cases}
      \frac{\primal}{\quasinorm\np{\primal}}\eqsepv
      & \primal \neq 0 \eqfinv
      \\
      0 \eqsepv
      & \text{ else.} 
    \end{cases}  
    \label{eq:normalization_mapping}
  \end{equation}
\end{definition}
A normalization function satisfies the same properties as a norm except
subadditivity. Thus, the unit ``ball''~$\TripleNormBall_{\quasinorm}$
in~\eqref{eq:quasinorm_unit_sphere} is not necessarily convex.
When $\quasinorm = \TripleNorm{\cdot}$ is a norm on~$\RR^d$
(resp. $\quasinorm=\Norm{\cdot}_{p}$ is the $\ell_{p}$ norm,
for $p \in [1, +\infty]$), we denote by~$\TripleNormBall$ (resp. $\TripleNormBall_{p}$) the unit ball:
\begin{equation}
  \TripleNormBall = 
  \defset{\primal \in \RR^d}{\TripleNorm{\primal} \leq 1} 
  \eqsepv
  \TripleNormBall_{p} = 
  \defset{\primal \in \RR^d}{\Norm{\primal}_{p} \leq 1} 
  \eqfinp
  \label{eq:triplenorm_unit_sphere}
\end{equation}


\paragraph{\Capra-couplings and conjugacies}

We introduce a new coupling, which extends
\cite[Definition~8]{Chancelier-DeLara:2020_CAPRA_OPTIMIZATION}, where the
normalizing factor was a norm, whereas it is more general in the definition
below. 

\begin{definition}
  Let $\quasinorm : \RR^{d} \to \RR_{+}$ be a normalization function. The \emph{constant along primal rays coupling}
  $\CouplingCapra: \RR^d\times\RR^d \to \RR$, or \Capra,
  between $\RR^d$ and itself, associated with $\quasinorm$, is the function
  \begin{equation}
    \CouplingCapra : \np{\primal,\dual} \in \RR^d\times\RR^d
    \mapsto 
    \proscal{\normalized_{\quasinorm}\np{\primal}}{\dual}
    =
    \begin{cases}
      \frac{ \proscal{\primal}{\dual} }{ \quasinorm\np{\primal} }
      \eqsepv
      & \primal \neq 0 \eqfinv
      \\
      0 \eqsepv
      & \text{ else.} 
    \end{cases}
    \label{eq:coupling_CAPRA}
  \end{equation}
\end{definition}
The coupling \Capra\ has the property of being 
constant along primal rays, hence the acronym~\Capra\
(Constant Along Primal RAys). This is a special case of a one-sided linear coupling as introduced in~\cite{Chancelier-DeLara:2021_ECAPRA_JCA}.
We review concepts and notations related to Fenchel-Moreau conjugacies
\cite{Singer:1997,Rubinov:2000,Martinez-Legaz:2005}.
The classical Fenchel conjugacy~$\Fenchelcoupling$ is outlined in
Appendix~\ref{Background_on_the_Fenchel_conjugacy}.

\begin{subequations}
  \begin{definition}
    The \emph{$\CouplingCapra$-Fenchel-Moreau conjugate} of a 
    function \( \fonctionprimal : \RR^d \to \barRR \), 
    with respect to the coupling~$\CouplingCapra$, is
    the function \( \SFM{\fonctionprimal}{\CouplingCapra} : \RR^d  \to \barRR \) 
    defined by
    \begin{equation}
      \SFM{\fonctionprimal}{\CouplingCapra}\np{\dual} = 
      \sup_{\primal \in \RR^d} \Bp{ \CouplingCapra\np{\primal,\dual} 
        \LowPlus \bp{ -\fonctionprimal\np{\primal} } } 
      \eqsepv \forall \dual \in \RR^d
      \eqfinp
      \label{eq:Fenchel-Moreau_conjugate}
    \end{equation}
    With $\Fenchelcoupling'$ the Fenchel conjugate as defined
    in~\eqref{eq:Fenchel_conjugate_reverse}, we denote
    \begin{equation}
      \SFM{\fonctionprimal}{\CouplingCapra\Fenchelcoupling'}
      = \LFMr{ \bp{ \SFM{\fonctionprimal}{\CouplingCapra} } }
      \eqfinp 
      \label{eq:Fenchel-Moreau_conjugate_Fenchel}
    \end{equation}
    The \emph{$\CouplingCapra$-Fenchel-Moreau biconjugate} of a 
    function \( \fonctionprimal : \RR^d  \to \barRR \), 
    with respect to the coupling~$\CouplingCapra$, is
    the function \( \SFMbi{\fonctionprimal}{\CouplingCapra} : \RR^d \to \barRR \) 
    defined by\footnote{With the coupling~$\CouplingCapra$, we associate the
      \emph{reverse coupling} $\CouplingCapra'$ defined by
      $\CouplingCapra'\np{\dual, \primal} = \CouplingCapra\np{\primal, \dual}$
      for $\np{\primal, \dual} \in \RR^{d} \times \RR^{d}$, hence the notation~$
      \SFMbi{\fonctionprimal}{\CouplingCapra}$.} 
    \begin{equation}
      \label{eq:Fenchel-Moreau_biconjugate}
      \SFMbi{\fonctionprimal}{\CouplingCapra}\np{\primal} = 
      \sup_{ \dual \in \RR^d } \Bp{ \CouplingCapra\np{\primal,\dual} 
        \LowPlus \bp{ -\SFM{\fonctionprimal}{\CouplingCapra}\np{\dual} } } 
      \eqsepv \forall \primal \in \RR^d 
      \eqfinp
    \end{equation}
  \end{definition}
  The Fenchel-Moreau biconjugate of a 
  function \( \fonctionprimal : \RR^d  \to \barRR \) satisfies $\SFMbi{\fonctionprimal}{\CouplingCapra} \leq \fonctionprimal$.
  %
\end{subequations}
%
We define the \emph{\Capra-subdifferential} of 
a function \( \fonctionprimal : \RR^d \to \barRR \) 
at~\( \primal \in  \RR^d \) by \cite{Akian-Gaubert-Kolokoltsov:2002}
\begin{align}
  \subdifferential{\CouplingCapra}{\fonctionprimal}\np{\primal} 
  &=
    \defset{ \dual \in \RR^d }{ %
    \SFM{\fonctionprimal}{\CouplingCapra}\np{\dual}=
    \CouplingCapra\np{\primal, \dual} 
    \LowPlus \bp{ -\fonctionprimal\np{\primal} } }
    \label{eq:Capra-subdifferential_b}
\end{align}

\subsection{\Capra -conjugates and subdifferentials of 0-homogeneous functions}
\label{subsec:capra_conj}

We now provide expressions for the 
\Capra -conjugates and subdifferentials of 0-homogeneous functions.

\begin{proposition}
  \label{pr:Fenchel-Moreau_conjugate_0-homogeneous}
  Let $\quasinorm: \RR^{d} \to \RR_{+}$ be a normalization function and
  $\CouplingCapra$ be the associated coupling~\eqref{eq:coupling_CAPRA}.
  For any 0-homogeneous function~$\fonctionprimal : \RR^d \to \barRR$,
  we have that (with $\Fenchelcoupling$ the Fenchel conjugate as defined
    in~\eqref{eq:Fenchel_conjugate}),
  \begin{equation}
    \SFM{\fonctionprimal}{\CouplingCapra}
    =
    \LFM{ \bp{ \fonctionprimal \UppPlus \delta_{\TripleNormBall_{\quasinorm}} } }
    =
    \LFM{ \bp{ \fonctionprimal \UppPlus
        \delta_{ \TripleNormSphere_{\quasinorm}^{\np{0}} } } }
    \eqfinp
    \label{eq:Fenchel-Moreau_conjugate_0-homogeneous}
  \end{equation}
\end{proposition}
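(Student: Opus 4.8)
The plan is to reduce the defining supremum of the \Capra-conjugate in~\eqref{eq:Fenchel-Moreau_conjugate} to a supremum over the unit ``sphere''~$\TripleNormSphere_{\quasinorm}^{\np{0}}$, and then to recognize the two right-hand sides of~\eqref{eq:Fenchel-Moreau_conjugate_0-homogeneous} as ordinary Fenchel conjugates of~$\fonctionprimal$ restricted, via the Moreau upper addition with a characteristic function, to the ``ball''~$\TripleNormBall_{\quasinorm}$ and to the ``sphere''~$\TripleNormSphere_{\quasinorm}^{\np{0}}$. The whole argument rests on the interplay between the $0$-homogeneity of~$\fonctionprimal$ and the absolute $1$-homogeneity of~$\quasinorm$.

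First I would rewrite the conjugate. Since $\CouplingCapra\np{\primal,\dual} = \proscal{\normalized_{\quasinorm}\np{\primal}}{\dual}$ is finite everywhere, the lower addition in~\eqref{eq:Fenchel-Moreau_conjugate} reduces to the extended addition and no $\np{+\infty}\LowPlus\np{-\infty}$ ambiguity arises. I would split the supremum over $\primal = 0$ and $\primal \neq 0$. For $\primal \neq 0$, one has $\CouplingCapra\np{\primal,\dual} = \proscal{\normalized_{\quasinorm}\np{\primal}}{\dual}$ and, by $0$-homogeneity, $\fonctionprimal\np{\primal} = \fonctionprimal\np{\normalized_{\quasinorm}\np{\primal}}$, so the summand equals $\proscal{s}{\dual}\LowPlus\bp{-\fonctionprimal\np{s}}$ at $s = \normalized_{\quasinorm}\np{\primal}$. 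Because $\normalized_{\quasinorm}$ maps $\RR^d\setminus\na{0}$ onto $\TripleNormSphere_{\quasinorm}$ (indeed $\quasinorm\bp{\primal/\quasinorm\np{\primal}} = 1$ by positive homogeneity, and every sphere point is its own image), and because $\primal = 0$ contributes the origin term $-\fonctionprimal\np{0}$, the supremum becomes $\sup_{s\in\TripleNormSphere_{\quasinorm}^{\np{0}}}\bp{\proscal{s}{\dual}\LowPlus\bp{-\fonctionprimal\np{s}}}$. It then remains to identify this with $\LFM{\bp{\fonctionprimal\UppPlus\delta_{\TripleNormSphere_{\quasinorm}^{\np{0}}}}}$: for $\primal$ outside the sphere, the upper addition forces the value $+\infty$ (even when $\fonctionprimal\np{\primal}=-\infty$, which is exactly why the \emph{upper} addition is used), so such $\primal$ contribute $-\infty$ to the Fenchel supremum and drop out, while on the sphere the characteristic function vanishes. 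This yields the second equality in~\eqref{eq:Fenchel-Moreau_conjugate_0-homogeneous}.

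It remains to prove the first equality, i.e.\ that passing from $\TripleNormSphere_{\quasinorm}^{\np{0}}$ to the full ball $\TripleNormBall_{\quasinorm}$ leaves the Fenchel conjugate unchanged. By the same characteristic-function bookkeeping, both sides are suprema of $\proscal{\primal}{\dual}\LowPlus\bp{-\fonctionprimal\np{\primal}}$ over $\TripleNormSphere_{\quasinorm}^{\np{0}}$ and over $\TripleNormBall_{\quasinorm}$ respectively. The inequality ``ball~$\geq$ sphere'' is immediate from the inclusion $\TripleNormSphere_{\quasinorm}^{\np{0}}\subset\TripleNormBall_{\quasinorm}$. For the reverse, I would fix $\primal\in\TripleNormBall_{\quasinorm}$, dispatch $\primal=0$ trivially, and for $\primal\neq 0$ write $\primal = t s$ with $s=\normalized_{\quasinorm}\np{\primal}\in\TripleNormSphere_{\quasinorm}$ and $t=\quasinorm\np{\primal}$, where $0<t\leq 1$; using $\fonctionprimal\np{\primal}=\fonctionprimal\np{s}$, the summand equals $t\proscal{s}{\dual}-\fonctionprimal\np{s}$. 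If $\proscal{s}{\dual}\geq 0$, then $t\leq 1$ gives $t\proscal{s}{\dual}-\fonctionprimal\np{s}\leq\proscal{s}{\dual}-\fonctionprimal\np{s}$, the value at the sphere point~$s$.

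The main obstacle is the remaining case $\proscal{s}{\dual}<0$, where shrinking $\primal$ toward the origin \emph{increases} $t\proscal{s}{\dual}-\fonctionprimal\np{s}$, so the sphere point~$s$ no longer dominates. Here I would pass to the antipode: since $\quasinorm$ is absolutely $1$-homogeneous, $-s\in\TripleNormSphere_{\quasinorm}$, and since $\fonctionprimal$ is $0$-homogeneous, $\fonctionprimal\np{-s}=\fonctionprimal\np{s}$. Then $\np{t+1}\proscal{s}{\dual}\leq 0$ gives $t\proscal{s}{\dual}-\fonctionprimal\np{s}\leq -\proscal{s}{\dual}-\fonctionprimal\np{s}=\proscal{-s}{\dual}-\fonctionprimal\np{-s}$, the value at the sphere point~$-s$. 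In both cases the ``ball'' summand is bounded above by the ``sphere'' supremum, yielding ``ball~$\leq$ sphere'' and hence the first equality. I would finally verify the Moreau-addition conventions at the points where $\fonctionprimal$ is infinite (in particular that $\fonctionprimal\np{0}$, which $0$-homogeneity leaves unconstrained, is correctly carried by the origin term), but these checks are routine once the finite-valued skeleton above is in place.
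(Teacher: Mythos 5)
Your proposal is correct and follows essentially the same route as the paper's proof: first rewriting the \Capra-conjugate as a Fenchel conjugate over $\TripleNormSphere_{\quasinorm}^{\np{0}}$ via the normalization mapping and $0$-homogeneity, then proving the ball/sphere equality by the inclusion for one direction and, for the other, replacing $\primal\in\TripleNormBall_{\quasinorm}$ by the sphere point $\normalized_{\quasinorm}\np{\primal}$ or its antipode according to the sign of $\proscal{\primal}{\dual}$. Your sign-splitting argument is exactly the paper's choice of $\sphere=\pm\primal/\quasinorm\np{\pm\primal}$, and the Moreau-addition caveats you defer are handled in the paper by keeping the condition $\fonctionprimal\np{\primal}=\fonctionprimal\np{\sphere}$ alongside the scalar-product inequality.
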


\begin{proof}
  For any \( \dual \in \RR^d \), we have that 
  \begin{align*}
    \SFM{\fonctionprimal}{\CouplingCapra}\np{\dual} = 
    \sup_{\primal \in \RR^d} \Bp{ \CouplingCapra\np{\primal,\dual} 
    \LowPlus \bp{ -\fonctionprimal\np{\primal} } } 
    &=
      \sup_{\primal' \in \RR^d} \Bp{ \proscal{\normalized_{\quasinorm}\np{\primal'}}{\dual} 
      \LowPlus \bp{ -\fonctionprimal\bp{\normalized_{\quasinorm}\np{\primal'}} } }
      \intertext{by definition~\eqref{eq:coupling_CAPRA} of
      \( \CouplingCapra\np{\primal,\dual} \), by
      definition~\eqref{eq:normalization_mapping} of the
      normalization mapping~$\normalized_{\quasinorm}$ and by
      0-homogeneity~\eqref{eq:function_is_0-homogeneous} of 
      the function~$\fonctionprimal$ (including the case $\primal=0$)}
    &=
      \sup_{\sphere \in \TripleNormSphere_{\quasinorm}^{\np{0}}} \Bp{ \proscal{\sphere}{\dual} 
      \LowPlus \bp{ -\fonctionprimal\np{\sphere} } }
      \intertext{by definition~\eqref{eq:quasinorm_unit_sphere} of~$\TripleNormSphere_{\quasinorm}^{\np{0}}$
      and as \( \normalized_{\quasinorm}\np{\RR^d} \subset
      \TripleNormSphere_{\quasinorm}^{\np{0}} = \normalized_{\quasinorm}\np{\TripleNormSphere_{\quasinorm}^{\np{0}}} \subset
      \normalized_{\quasinorm}\np{\RR^d} \)
      using the positive homogeneity of $\quasinorm$ and the property that $\quasinorm\np{\primal} \neq 0$ if $\primal \neq 0$}
    &=
      \sup_{\primal \in \RR^d} \bgp{ \proscal{\primal}{\dual} 
      \LowPlus \Bp{ -\bp{\fonctionprimal
      \UppPlus \delta_{ \TripleNormSphere_{\quasinorm}^{\np{0}} } }\np{\primal} } }
    \\     
    &=
      \LFM{ \bp{ \fonctionprimal \UppPlus\delta_{
      \TripleNormSphere_{\quasinorm}^{\np{0}} } } }\np{\dual}
      \tag{by~\eqref{eq:Fenchel_conjugate} }
      \eqfinp 
  \end{align*}
  We have shown that \(  \SFM{\fonctionprimal}{\CouplingCapra}
  =
  \LFM{ \bp{ \fonctionprimal \UppPlus
      \delta_{ \TripleNormSphere_{\quasinorm}^{\np{0}} } } } \).
  We now prove that \( \LFM{ \bp{ \fonctionprimal \UppPlus \delta_{\TripleNormBall_{\quasinorm}} } }
  =          \LFM{ \bp{ \fonctionprimal \UppPlus
      \delta_{ \TripleNormSphere_{\quasinorm}^{\np{0}} } } } \). 

  On the one hand, as \( \TripleNormBall_{\quasinorm} \supset 
  \TripleNormSphere_{\quasinorm}^{\np{0}} \) implies that 
  \( \fonctionprimal \UppPlus \delta_{\TripleNormBall_{\quasinorm}} 
  \leq
  \fonctionprimal \UppPlus
  \delta_{ \TripleNormSphere_{\quasinorm}^{\np{0}}} \),
  we deduce that
  \(    \LFM{ \bp{ \fonctionprimal \UppPlus \delta_{\TripleNormBall_{\quasinorm}} } }
  \geq 
  \LFM{ \bp{ \fonctionprimal \UppPlus
      \delta_{ \TripleNormSphere_{\quasinorm}^{\np{0}} } } }
  \) by taking the order-reversing Fenchel
  conjugate~\eqref{eq:Fenchel_conjugate}.
  On the other hand, we fix \( \dual \in \RR^d \) and we show that,
  for any \( \primal\in\TripleNormBall_{\quasinorm} \), there exists
  \( \sphere\in\TripleNormSphere_{\quasinorm}^{\np{0}} \) such that
  \( \proscal{\primal}{\dual} \leq  \proscal{\sphere}{\dual} \)
  and \( \fonctionprimal\np{\primal}=\fonctionprimal\np{\sphere} \).
  Indeed, it suffices to take \( \sphere=0 \) when \( \primal=0 \),
  \( \sphere=\primal / \quasinorm\np{\primal} \) when \( \primal\neq 0 \)
  and \( \proscal{\primal}{\dual} \geq 0 \),
  and \( \sphere=-\primal / \quasinorm\np{-\primal} \) when \( \primal\neq 0 \)
  and \( \proscal{\primal}{\dual} \leq 0 \)
  (by using \( \quasinorm\np{\primal} = \quasinorm\np{-\primal} \leq 1 \) as $\primal \in \TripleNormBall_{\quasinorm}$ and the
  0-homogeneity~\eqref{eq:function_is_0-homogeneous} of 
  the function~$\fonctionprimal$). 
  We deduce that
  \[
    \LFM{ \bp{ \fonctionprimal \UppPlus \delta_{\TripleNormBall_{\quasinorm}} } }\np{\dual} = 
    \sup_{\primal \in \TripleNormBall_{\quasinorm}} \Bp{ \proscal{\primal}{\dual} 
      \LowPlus \bp{ -\fonctionprimal\np{\primal}} }
    \leq
    \sup_{\sphere \in \TripleNormSphere_{\quasinorm}^{\np{0}}} \Bp{ \proscal{\sphere}{\dual} 
      \LowPlus \bp{ -\fonctionprimal\np{\sphere} } }
    = \LFM{ \bp{ \fonctionprimal \UppPlus
        \delta_{ \TripleNormSphere_{\quasinorm}^{\np{0}} } } }\np{\dual}
    \eqfinp         
  \]
  This ends the proof. 
\end{proof}


Whereas Proposition~\ref{pr:Fenchel-Moreau_conjugate_0-homogeneous} relates the
\Capra-conjugate of 0-homogeneous functions with the classical Fenchel-Moreau
conjugate, in Proposition~\ref{prop:capra_subdiff}, we relate the
\Capra-subdifferential with the well-known Rockafellar-Moreau subdifferential.

\begin{proposition}
  \label{prop:capra_subdiff}
  Let $\fonctionprimal: \RR^{d} \to \barRR$ be any function.
  For all \( \primal \in \RR^d \),
  the \Capra-subdifferential \( \subdifferential{\CouplingCapra}{\fonctionprimal}\np{\primal} \),
  as in~\eqref{eq:Capra-subdifferential_b}, is a closed convex set.
  Moreover, if the function~$\fonctionprimal$ is 0-homogeneous, 
  we have that
  \begin{subequations}
    \begin{empheq}[left = \empheqlbrace]{align}
      &\subdifferential{\CouplingCapra}{\fonctionprimal}\np{0} = \subdifferential{}{\np{\fonctionprimal \UppPlus \delta_{\TripleNormBall_{\quasinorm}}}}\np{0} \eqfinv
      \label{eq:Capra-subdifferential=Rockafellar-Moreau-subdifferential1}\\
      &\sphere \in \TripleNormSphere_{\quasinorm} \text{ and }
      \SFM{\fonctionprimal}{\CouplingCapra\Fenchelcoupling'}\np{\sphere}
      =\fonctionprimal\np{\sphere}
      \implies
      \subdifferential{\CouplingCapra}{\fonctionprimal}\np{\sphere}
      =\subdifferential{}{\SFM{\fonctionprimal}{\CouplingCapra\Fenchelcoupling'}}\np{\sphere}        
      \eqfinv
      \label{eq:Capra-subdifferential=Rockafellar-Moreau-subdifferential2}
    \end{empheq}
  \end{subequations}
  where $\TripleNormBall_{\quasinorm}$ is defined in~\eqref{eq:quasinorm_unit_sphere} for the normalization function $\quasinorm$ that generates the coupling $\CouplingCapra$, and $\subdifferential{}{\np{\fonctionprimal \UppPlus \delta_{\TripleNormBall_{\quasinorm}}}}$ and $\subdifferential{}{\SFM{\fonctionprimal}{\CouplingCapra\Fenchelcoupling'}}$ are (Rockafellar-Moreau) subdifferentials
  as in~\eqref{eq:Rockafellar-Moreau-subdifferential_a}.
\end{proposition}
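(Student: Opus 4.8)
The plan is to reduce all three assertions to the classical Fenchel conjugacy via Proposition~\ref{pr:Fenchel-Moreau_conjugate_0-homogeneous}, systematically exploiting that the coupling $\CouplingCapra\np{\primal,\dual} = \proscal{\normalized_{\quasinorm}\np{\primal}}{\dual}$ is \emph{linear in the dual variable}~$\dual$. For the first assertion (closedness and convexity, which holds for any $\fonctionprimal$), I would start from the Fenchel--Young-type inequality $\SFM{\fonctionprimal}{\CouplingCapra}\np{\dual} \geq \CouplingCapra\np{\primal,\dual} \LowPlus \bp{-\fonctionprimal\np{\primal}}$, valid for all $\primal,\dual$ since $\SFM{\fonctionprimal}{\CouplingCapra}\np{\dual}$ is a supremum over all primal points. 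Hence the defining equality in~\eqref{eq:Capra-subdifferential_b} is equivalent to the single inequality $\SFM{\fonctionprimal}{\CouplingCapra}\np{\dual} \leq \CouplingCapra\np{\primal,\dual} \LowPlus \bp{-\fonctionprimal\np{\primal}}$. For fixed~$\primal$, the function $\dual \mapsto \SFM{\fonctionprimal}{\CouplingCapra}\np{\dual}$ is a supremum of affine functions of~$\dual$ (again by linearity in~$\dual$), hence convex and lower semicontinuous, whereas $\dual \mapsto \CouplingCapra\np{\primal,\dual}= \proscal{\normalized_{\quasinorm}\np{\primal}}{\dual}$ is a finite linear function. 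The subdifferential is therefore a sublevel set of the convex lower semicontinuous function $\dual\mapsto \SFM{\fonctionprimal}{\CouplingCapra}\np{\dual} - \proscal{\normalized_{\quasinorm}\np{\primal}}{\dual}$, which is closed and convex; the only care needed is to treat $\fonctionprimal\np{\primal} = \pm\infty$ separately using the Moreau-addition conventions (the set being then empty or all of~$\RR^d$).

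For~\eqref{eq:Capra-subdifferential=Rockafellar-Moreau-subdifferential1}, I would specialize to $\primal = 0$, where $\CouplingCapra\np{0,\dual} = 0$, so that the \Capra-subdifferential reduces to $\defset{\dual \in \RR^d}{\SFM{\fonctionprimal}{\CouplingCapra}\np{\dual} = -\fonctionprimal\np{0}}$. Setting \( g = \fonctionprimal \UppPlus \delta_{\TripleNormBall_{\quasinorm}} \), Proposition~\ref{pr:Fenchel-Moreau_conjugate_0-homogeneous} gives $\SFM{\fonctionprimal}{\CouplingCapra} = \LFM{g}$, and since $0 \in \TripleNormBall_{\quasinorm}$ we have $g\np{0} = \fonctionprimal\np{0}$. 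The Rockafellar--Moreau subdifferential of~$g$ at~$0$ is $\defset{\dual \in \RR^d}{\LFM{g}\np{\dual} = -g\np{0}}$, which is exactly the previous set. This matches the two sides of~\eqref{eq:Capra-subdifferential=Rockafellar-Moreau-subdifferential1}.

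For the more delicate identity~\eqref{eq:Capra-subdifferential=Rockafellar-Moreau-subdifferential2}, write $h = \SFM{\fonctionprimal}{\CouplingCapra\Fenchelcoupling'} = \LFMr{\bp{\SFM{\fonctionprimal}{\CouplingCapra}}}$. Since $\SFM{\fonctionprimal}{\CouplingCapra}$ is closed convex (a supremum of affine functions of~$\dual$), the biconjugation theorem for the Fenchel conjugacy yields $\LFM{h} = \SFM{\fonctionprimal}{\CouplingCapra}$. Moreover $\sphere \in \TripleNormSphere_{\quasinorm}$ means $\quasinorm\np{\sphere} = 1$, hence $\normalized_{\quasinorm}\np{\sphere} = \sphere$ and the coupling linearizes as $\CouplingCapra\np{\sphere,\dual} = \proscal{\sphere}{\dual}$. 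Writing out both subdifferentials,
\[
  \subdifferential{\CouplingCapra}{\fonctionprimal}\np{\sphere} = \defset{\dual \in \RR^d}{\SFM{\fonctionprimal}{\CouplingCapra}\np{\dual} = \proscal{\sphere}{\dual} \LowPlus \bp{-\fonctionprimal\np{\sphere}}}
\]
and
\[
  \subdifferential{}{h}\np{\sphere} = \defset{\dual \in \RR^d}{\LFM{h}\np{\dual} = \proscal{\sphere}{\dual} \LowPlus \bp{-h\np{\sphere}}} \eqfinv
\]
the two coincide once I substitute $\LFM{h} = \SFM{\fonctionprimal}{\CouplingCapra}$ and invoke the hypothesis $h\np{\sphere} = \fonctionprimal\np{\sphere}$, which is precisely what aligns the constant terms.

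I expect the main obstacle to be the bookkeeping of the composed conjugacy in the third part: keeping straight that $\SFM{\fonctionprimal}{\CouplingCapra\Fenchelcoupling'}$ is the reverse Fenchel conjugate of the \Capra-conjugate, that its own Fenchel conjugate returns the \Capra-conjugate thanks to closed convexity, and that the hypothesis $\SFM{\fonctionprimal}{\CouplingCapra\Fenchelcoupling'}\np{\sphere} = \fonctionprimal\np{\sphere}$ is the exact hinge making the two Fenchel--Young equalities identical. Throughout, the $\pm\infty$ conventions of the lower and upper Moreau additions must be tracked so that the equalities defining the subdifferentials remain meaningful at points where $\fonctionprimal$, or its conjugates, take infinite values.
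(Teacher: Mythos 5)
Your proposal follows essentially the same route as the paper's proof: reduce everything to the classical Fenchel conjugacy via Proposition~\ref{pr:Fenchel-Moreau_conjugate_0-homogeneous}, express the \Capra-subdifferential as the set where the closed convex function \( \SFM{\fonctionprimal}{\CouplingCapra} \) lies below an affine function of~\( \dual \) (treating the \( \pm\infty \) cases separately), and, for~\eqref{eq:Capra-subdifferential=Rockafellar-Moreau-subdifferential2}, use that \( \SFM{\fonctionprimal}{\CouplingCapra} \) equals its Fenchel biconjugate together with the hypothesis \( \SFM{\fonctionprimal}{\CouplingCapra\Fenchelcoupling'}\np{\sphere}=\fonctionprimal\np{\sphere} \) and the identity \( \CouplingCapra\np{\sphere,\dual}=\proscal{\sphere}{\dual} \) on the unit sphere. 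The one minor (and harmless) divergence is that you justify the closed convexity of \( \SFM{\fonctionprimal}{\CouplingCapra} \) directly as a supremum of affine functions of~\( \dual \), which is actually slightly cleaner for the first assertion since it holds for an arbitrary~\( \fonctionprimal \), whereas the paper invokes Proposition~\ref{pr:Fenchel-Moreau_conjugate_0-homogeneous} at that point.
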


\begin{proof}
  We prove that \( \subdifferential{\CouplingCapra}{\fonctionprimal}\np{\primal} \) is a closed convex set.
  Let $\primal \in \RR^{d}$ and first suppose that $\fonctionprimal\np{\primal}
  = - \infty$.   Using~\eqref{eq:Fenchel-Moreau_conjugate}
  and~\eqref{eq:Capra-subdifferential_b}, we can check that
  $\subdifferential{\CouplingCapra}{\fonctionprimal}\np{\primal} = \RR^{d}$,
  which is closed and convex.
  In the case where $\fonctionprimal\np{\primal} = +
  \infty$, we have that 
  $\subdifferential{\CouplingCapra}{\fonctionprimal}\np{\primal} = \emptyset$ if
  $\fonctionprimal$ is not identically $+\infty$, and that
  $\subdifferential{\CouplingCapra}{\fonctionprimal}\np{\primal} = \RR^{d}$
  otherwise; in either cases, the \Capra-subdifferential is closed and
  convex.
  Now, suppose that $\fonctionprimal\np{\primal} \in \RR$.
  By definition~\eqref{eq:Fenchel-Moreau_conjugate}
  of \( \SFM{\fonctionprimal}{\CouplingCapra} \),
  the \Capra-subdifferential~\eqref{eq:Capra-subdifferential_b}
  can be written as
  \(
    \subdifferential{\CouplingCapra}{\fonctionprimal}\np{\primal} 
    =
    \bset{ \dual \in \RR^d }{ %
      \SFM{\fonctionprimal}{\CouplingCapra}\np{\dual} \leq 
      \CouplingCapra\np{\primal, \dual} 
      \LowPlus \bp{ -\fonctionprimal\np{\primal} } }
  \)
  where the function
  \( \SFM{\fonctionprimal}{\CouplingCapra} = \LFM{ \bp{ \fonctionprimal \UppPlus
      \delta_{ \TripleNormSphere_{\quasinorm}^{\np{0}} } } }\) is a Fenchel conjugate
  by~\eqref{eq:Fenchel-Moreau_conjugate_0-homogeneous}
  hence is closed convex (see the background material in
  Appendix~\ref{Background_on_the_Fenchel_conjugacy}), 
  and the function~\( g_{\primal}:~\dual \mapsto \CouplingCapra\np{\primal,
    \dual} \LowPlus \bp{ -\fonctionprimal\np{\primal} } = \CouplingCapra\np{\primal,
    \dual} -\fonctionprimal\np{\primal}  \) is affine.
  As a consequence, the set of points where $\SFM{\fonctionprimal}{\CouplingCapra} \leq g_{\primal}$, which is exactly \( \subdifferential{\CouplingCapra}{\fonctionprimal}\np{\primal} \), is a closed convex set.
  \medskip

  We prove~\eqref{eq:Capra-subdifferential=Rockafellar-Moreau-subdifferential1}
  as follows: 
  \begin{align*}
    \dual\in\subdifferential{}{\np{\fonctionprimal \UppPlus \delta_{\TripleNormBall_{\quasinorm}}}}\np{0}
    & \iff
      \LFM{\bp{\fonctionprimal \UppPlus \delta_{\TripleNormBall_{\quasinorm}}}}\np{\dual} 
      =\proscal{0}{\dual} \LowPlus \bp{ -\np{\fonctionprimal
      \UppPlus \delta_{\TripleNormBall_{\quasinorm}}}\np{0} }
                \intertext{by definition~\eqref{eq:Rockafellar-Moreau-subdifferential_a} of 
           the (Rockafellar-Moreau) subdifferential of a function}
    & \iff
      \SFM{\fonctionprimal}{\CouplingCapra}\np{\dual} 
      =\proscal{0}{\dual} \LowPlus \bp{ -\fonctionprimal\np{0} }
      \tag{by Proposition~\ref{pr:Fenchel-Moreau_conjugate_0-homogeneous}}
    \\
    & \iff
      \SFM{\fonctionprimal}{\CouplingCapra}\np{\dual} 
      =\CouplingCapra\np{0,\dual} \LowPlus \bp{ -\fonctionprimal\np{0} }
      \tag{since since \( \CouplingCapra\np{0,\dual} = 0 \) by~\eqref{eq:coupling_CAPRA}}
    \\
    & \iff
      \dual\in\subdifferential{\CouplingCapra}%
      { \fonctionprimal }\np{0} \tag{by definition~\eqref{eq:Capra-subdifferential_b} of
      the \Capra-subdifferential}
      \eqfinp
  \end{align*}

  Letting \( \sphere \in \TripleNormSphere_{\quasinorm} \) be such that
  \( \SFM{\fonctionprimal}{\CouplingCapra\Fenchelcoupling'}\np{\sphere}
  =\fonctionprimal\np{\sphere} \), we prove~\eqref{eq:Capra-subdifferential=Rockafellar-Moreau-subdifferential2} as follows: 
  \begin{align*}
    \dual \in
    \subdifferential{}{\SFM{\fonctionprimal}{\CouplingCapra\Fenchelcoupling'}}\np{\sphere} 
    \iff &
           \LFM{ \bp{ \SFM{\fonctionprimal}{\CouplingCapra\Fenchelcoupling'} } }\np{\dual}
           = \nscal{\sphere}{\dual} \LowPlus
           \bp{ -{\SFM{\fonctionprimal}{\CouplingCapra\Fenchelcoupling'}\np{\sphere} } }
           \tag{by~\eqref{eq:Rockafellar-Moreau-subdifferential_a}}
\\
    \iff &
           \SFM{\fonctionprimal}{\CouplingCapra}\np{\dual}
           = \nscal{\sphere}{\dual} \LowPlus
           \bp{ -{\SFM{\fonctionprimal}{\CouplingCapra\Fenchelcoupling'}\np{\sphere} } }
           \intertext{because the function
           \( \SFM{\fonctionprimal}{\CouplingCapra} = \LFM{ \bp{ \fonctionprimal \UppPlus
           \delta_{ \TripleNormSphere_{\quasinorm}^{\np{0}} } } }\) is a Fenchel conjugate
           by~\eqref{eq:Fenchel-Moreau_conjugate_0-homogeneous}, 
           hence is closed convex, and therefore equal to its Fenchel biconjugate
           \(
           \SFM{\fonctionprimal}{\CouplingCapra\Fenchelcoupling'\Fenchelcoupling}
           \eqfinv\)
            (see the background material in
  Appendix~\ref{Background_on_the_Fenchel_conjugacy})}
    \iff &
           \SFM{\fonctionprimal}{\CouplingCapra}\np{\dual}
           = \CouplingCapra\np{\sphere,\dual}  \LowPlus
           \bp{ -{\SFM{\fonctionprimal}{\CouplingCapra\Fenchelcoupling'}\np{\sphere} } }
           \intertext{by definition~\eqref{eq:coupling_CAPRA} of
           \( \CouplingCapra\np{\sphere,\dual} \) as \( \sphere \in
           \TripleNormSphere_{\quasinorm} \) hence
           \(\quasinorm\np{\primal} = 1 \) by~\eqref{eq:quasinorm_unit_sphere}, }
    \iff &
           \SFM{\fonctionprimal}{\CouplingCapra}\np{\dual}
           = \CouplingCapra\np{\sphere,\dual}  \LowPlus
           \bp{ -\fonctionprimal\np{\sphere} } 
           \tag{by assumption that \( \SFM{\fonctionprimal}{\CouplingCapra\Fenchelcoupling'}\np{\sphere}
  =\fonctionprimal\np{\sphere} \)}
    \\
    \iff &
           \dual \in
           \subdifferential{\CouplingCapra}{\fonctionprimal}\np{\sphere}
           \eqfinp
           \tag{by definition~\eqref{eq:Capra-subdifferential_b} of
           the \Capra-subdifferential}
  \end{align*}
  This ends the proof.
\end{proof}

Now, we are going to show how \Capra-couplings are a suitable tool to obtain lower
convex approximations of 0-homogeneous functions. 

\section{Best convex lower approximations of 0-homogeneous functions}
\label{sec:best_cvx_approx}

In~\S\ref{subsec:cvx_appr_0_hom}, we identify --- in terms of \Capra-conjugacy
introduced in Section~\ref{sec:capra} --- the best lower approximation of
0-homogeneous functions by convex and by positively 1-homogeneous convex
functions.
We apply these results to the \lzeropseudonorm \ in~\S\ref{subsec:appli_l0}.

\subsection{General result}
\label{subsec:cvx_appr_0_hom}

We prove that the best convex lower approximation of a 0-homogeneous function can be expressed
in term of its \Capra-conjugate~\eqref{eq:Fenchel-Moreau_conjugate_Fenchel}.
We also prove that the best positively 1-homogeneous closed convex lower
approximation of a 0-homogeneous function can be expressed 
in term of its \Capra-subdifferential~\eqref{eq:Capra-subdifferential_b}.
We recall that, for any subset \( \Dual\subset\RR^d \),
\( \supportFunction_{\Dual} : \RR^d \to \barRR\) denotes the 
\emph{support function of the subset~$\Dual$}:
\( \supportFunction_{\Dual}\np{\primal} = 
\sup_{\dual\in\Dual} \proscal{\primal}{\dual} \), for any
\( \primal \in \RR^d \).

The proof of the following theorem relies on results given in
Appendix~\ref{app:best_cvx_general}.

\begin{theorem}
  Let $\quasinorm : \RR^{d} \to \RR_{+}$ be a normalization function, with
  unit ``ball''~$\TripleNormBall_{\quasinorm}$ 
  defined in~\eqref{eq:quasinorm_unit_sphere}, and   
  with associated \Capra-coupling $\CouplingCapra$ in~\eqref{eq:coupling_CAPRA}. 
  Let \( \fonctionprimal : \RR^d \to \barRR \) be a 0-homogeneous
  function. Then, 
  \begin{enumerate}[topsep=0pt,itemsep=0pt,leftmargin=1cm]
  \item
    the function
  \( \SFM{\fonctionprimal}{\CouplingCapra\Fenchelcoupling'}
  \)
  is the tightest closed convex function below~\( \fonctionprimal \)
  on  the unit ``ball''~\( \TripleNormBall_{\quasinorm} \),
\item
  if \( \fonctionprimal\np{0}=0 \),
    the function
  \( \supportFunction_{ \subdifferential{\CouplingCapra}{\fonctionprimal}\np{0} }
  \) 
  is the tightest closed convex positively 1-homogeneous function below~\( \fonctionprimal \)
  on  the unit ``ball''~\( \TripleNormBall_{\quasinorm} \).
  \end{enumerate}
  \label{th:BestConvexLowerApproximations}
\end{theorem}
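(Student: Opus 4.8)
The plan is to transfer both statements to the whole space by means of the auxiliary function $g = \fonctionprimal \UppPlus \delta_{\TripleNormBall_{\quasinorm}}$, which equals $\fonctionprimal$ on the unit ``ball''~$\TripleNormBall_{\quasinorm}$ and $+\infty$ outside it. The key elementary observation I would establish first is that, for \emph{any} function $h : \RR^d \to \barRR$, one has $h \leq \fonctionprimal$ on $\TripleNormBall_{\quasinorm}$ if and only if $h \leq g$ on all of~$\RR^d$: on the ``ball'' $g = \fonctionprimal$ (since $\alpha \UppPlus 0 = \alpha$), while off the ``ball'' $g = +\infty$ dominates $h$ trivially. Hence ``tightest closed convex (resp.\ closed convex positively $1$-homogeneous) minorant of $\fonctionprimal$ on $\TripleNormBall_{\quasinorm}$'' becomes ``greatest closed convex (resp.\ closed convex positively $1$-homogeneous) function below $g$ on~$\RR^d$'', which is the situation covered by the general statements of Appendix~\ref{app:best_cvx_general}; it then remains to translate the answers via Propositions~\ref{pr:Fenchel-Moreau_conjugate_0-homogeneous} and~\ref{prop:capra_subdiff}.

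For item~1, I would invoke the classical fact that the greatest closed convex function below $g$ is its Fenchel biconjugate $\LFMr{\bp{\LFM{g}}}$. Proposition~\ref{pr:Fenchel-Moreau_conjugate_0-homogeneous}, which exploits the $0$-homogeneity of $\fonctionprimal$, gives $\LFM{g} = \SFM{\fonctionprimal}{\CouplingCapra}$, so that the biconjugate equals $\LFMr{\bp{\SFM{\fonctionprimal}{\CouplingCapra}}}$, which is exactly $\SFM{\fonctionprimal}{\CouplingCapra\Fenchelcoupling'}$ by definition~\eqref{eq:Fenchel-Moreau_conjugate_Fenchel}. This yields item~1.

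For item~2, the assumption $\fonctionprimal\np{0}=0$ together with $0 \in \TripleNormBall_{\quasinorm}$ gives $g\np{0}=0$, so the Rockafellar--Moreau subdifferential reads $\subdifferential{}{g}\np{0} = \defset{\dual \in \RR^d}{\proscal{\primal}{\dual} \leq g\np{\primal} \eqsepv \forall \primal \in \RR^d}$. Its support function $\supportFunction_{\subdifferential{}{g}\np{0}}$ is, by construction, closed convex and positively $1$-homogeneous, and it lies below $g$ because each $\dual \in \subdifferential{}{g}\np{0}$ satisfies $\proscal{\cdot}{\dual} \leq g$. Conversely, any closed convex positively $1$-homogeneous $\phi$ with $\phi \leq g$ is the support function $\phi = \supportFunction_{C}$ of the closed convex set $C = \subdifferential{}{\phi}\np{0}$; from $\proscal{\cdot}{\dual} \leq \supportFunction_C = \phi \leq g$ for every $\dual \in C$ one gets $C \subseteq \subdifferential{}{g}\np{0}$, whence $\phi = \supportFunction_C \leq \supportFunction_{\subdifferential{}{g}\np{0}}$. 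Thus $\supportFunction_{\subdifferential{}{g}\np{0}}$ is the tightest such minorant, and Proposition~\ref{prop:capra_subdiff}, specifically the identity~\eqref{eq:Capra-subdifferential=Rockafellar-Moreau-subdifferential1} stating $\subdifferential{}{g}\np{0} = \subdifferential{\CouplingCapra}{\fonctionprimal}\np{0}$, rewrites this as $\supportFunction_{\subdifferential{\CouplingCapra}{\fonctionprimal}\np{0}}$, giving item~2.

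The step I expect to be the main obstacle is the careful treatment of the degenerate and improper cases, which is presumably where the general lemmas of Appendix~\ref{app:best_cvx_general} do the real work. Specifically, the identity ``biconjugate $=$ greatest closed convex minorant'' and the representation of a closed convex positively $1$-homogeneous function as the support function of its subdifferential at $0$ are clean only in the proper setting (existence of an affine minorant, nonemptiness of $\subdifferential{}{g}\np{0}$); the cases where $\fonctionprimal$ or its convex envelope takes the value $-\infty$, or where $\subdifferential{}{g}\np{0} = \emptyset$ so that $\supportFunction_{\subdifferential{}{g}\np{0}} \equiv -\infty$, require separate bookkeeping consistent with the Moreau lower and upper additions.
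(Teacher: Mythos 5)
Your proposal is correct and follows essentially the same route as the paper: the paper likewise reduces everything to the auxiliary function $\fonctionprimal \UppPlus \delta_{\TripleNormBall_{\quasinorm}}$, characterizes the tightest closed convex (resp.\ closed convex positively $1$-homogeneous) minorant on a subset as the Fenchel biconjugate (resp.\ the support function of the Rockafellar--Moreau subdifferential at~$0$) in Propositions~\ref{prop:best_cvx_subset} and~\ref{prop:best_pos_hom_cvx_subset}, and then translates via Propositions~\ref{pr:Fenchel-Moreau_conjugate_0-homogeneous} and~\ref{prop:capra_subdiff} exactly as you do. The degenerate/improper cases you flag at the end are precisely what those appendix propositions take care of (via the complete lattice of extended-real-valued functions and the identification of proper closed convex positively $1$-homogeneous functions with support functions), so there is no genuine gap.
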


\begin{proof}
  From Proposition~\ref{prop:best_cvx_subset} with
  $\Uncertain = \TripleNormBall_{\quasinorm}$, the tightest closed convex
  function below $\fonctionprimal$ on the unit
  "ball"~$\TripleNormBall_{\quasinorm}$ is
  $\LFMbi{ \bp{ \fonctionprimal \UppPlus \delta_{\TripleNormBall_{\quasinorm}} }
  }$. As the function~$\fonctionprimal$ is 0-homogeneous, by
  Proposition~\ref{pr:Fenchel-Moreau_conjugate_0-homogeneous}, we have that
  \( \SFM{\fonctionprimal}{\CouplingCapra}= \LFM{ \bp{\fonctionprimal \UppPlus
      \delta_{\TripleNormBall_{\quasinorm}} } } \).  By taking the Fenchel
  conjugate~\eqref{eq:Fenchel_conjugate_reverse}, we deduce that
  \( \LFMbi{ \bp{
      \fonctionprimal \UppPlus \delta_{\TripleNormBall_{\quasinorm}} } }
  =\SFM{\fonctionprimal}{\CouplingCapra\Fenchelcoupling'} \).
  
  From Proposition~\ref{prop:best_pos_hom_cvx_subset} with
  $\Uncertain = \TripleNormBall_{\quasinorm}$, the tightest positively
  1-homogeneous closed convex function below $\fonctionprimal$ on the unit
  ``ball''~$\TripleNormBall_{\quasinorm}$ is
  $\supportFunction_{ \subdifferential{}{\np{\fonctionprimal \UppPlus
        \delta_{\TripleNormBall_{\quasinorm}}}}\np{0} }$. As the function~$\fonctionprimal$
  is 0-homogeneous, by Proposition~\ref{prop:capra_subdiff}, we have
  $\subdifferential{}{\np{\fonctionprimal \UppPlus
      \delta_{\TripleNormBall_{\quasinorm}}}}\np{0} =
  \subdifferential{\CouplingCapra}{\fonctionprimal}\np{0}$ and therefore
  $\supportFunction_{\subdifferential{}{\np{\fonctionprimal \UppPlus
        \delta_{\TripleNormBall_{\quasinorm}}}}\np{0}} =
  \supportFunction_{\subdifferential{\CouplingCapra}{\fonctionprimal}\np{0}}$,
  which gives the desired result.
\end{proof}

\subsection{Application to the \lzeropseudonorm}
\label{subsec:appli_l0}

As an application, we study the particular case of the \lzeropseudonorm. 
For any \( \primal \in \RR^d \) 
and subset \( K \subset \na{1,\ldots,d} \), 
we denote by
\( \primal_K \in \RR^d \) the vector which coincides with~\( \primal \),
except for the components outside of~$K$ that vanish.
Let $\TripleNorm{\cdot}$ be a norm on $\RR^{d}$, called the \emph{source norm}.
For any subset \( K \subset \ic{1,d} \),
we define the subspace
\(  \FlatRR_{K} = 
\bset{ \primal \in \RR^d }{ \primal_j=0 \eqsepv \forall j \not\in K } \) of~\(
\RR^d \). We introduce the \emph{$K$-restriction norm} \( \TripleNorm{\cdot}_{K} \), defined by  \( \TripleNorm{\primal}_{K} = \TripleNorm{\primal} \),
for any \( \primal \in \FlatRR_{K} \). We also define
the $\SetStar{K}$-norm
\( \TripleNorm{\cdot}_{K,\star} \), which is 
the norm \( \bp{\TripleNorm{\cdot}_{K}}_{\star} \),
given by the dual norm (on the subspace~\( \FlatRR_{K} \))
of the restriction norm~\( \TripleNorm{\cdot}_{K} \) 
to the subspace~\( \FlatRR_{K} \) (first restriction, then dual).
Following~\cite[Definition~3]{Chancelier-DeLara:2020_CAPRA_OPTIMIZATION}), for \( k \in \ic{1,d} \), we call \emph{generalized coordinate-$k$ norm}
the norm \( \CoordinateNorm{\TripleNorm{\cdot}}{k} \) 
whose dual norm is the 
\emph{generalized dual coordinate-$k$ norm}, denoted by
\( \CoordinateNormDual{\TripleNorm{\cdot}}{k} \), 
with expression $\CoordinateNormDual{\TripleNorm{\dual}}{k} =
\sup_{\cardinal{K} \leq k} \TripleNorm{\dual_K}_{K,\star}$ for all $\dual \in \RR^{d}$.
We denote by~\( \CoordinateNorm{\TripleNormBall}{k} \) and
\( \CoordinateNormDual{\TripleNormBall}{k} \)
the respective unit balls.
%
In Table~\ref{tab:Examples},
we give examples of coordinate-$k$ and dual coordinate-$k$ norms
in the case of the $\ell_p$ source norms.
We recall that, for $p\in [1,\infty]$, the dual norm of the $\ell_p$-norm~$\norm{\cdot}_{p}$
is the $\ell_q$-norm~$\norm{\cdot}_{q}$, where $q$ is such that \(1/p + 1/q=1\) 
(with the extreme cases $q=\infty$ when $p=1$, and $q=1$ when $p=\infty$).

\begin{table}[hbtp]
  \caption{Examples of coordinate-$k$ and dual coordinate-$k$ norms
    generated by the $\ell_p$ source norms 
    \( \TripleNorm{\cdot} = \norm{\cdot}_{p} \) for $p\in [1,\infty]$,
    where $1/p + 1/q =1$.
    For \( \dual \in \RR^d \), $\tau$ denotes a permutation of \( \{1,\ldots,d\} \) such that
    \( \module{ \dual_{\tau(1)} } \geq \module{ \dual_{\tau(2)} } 
    \geq \cdots \geq \module{ \dual_{\tau(d)} } \).
    \label{tab:Examples}}
  \centering
  \begin{tabular}{||c||c|c||}  
    \hline\hline 
    \small{source norm} \( \TripleNorm{\cdot} \) 
    & \( \CoordinateNorm{\TripleNorm{\cdot}}{k} \), $k\in\ic{1,d}$
    & \( \CoordinateNormDual{\TripleNorm{\cdot}}{k} \), $k\in\ic{1,d}$
    \\
    \hline\hline 
    \( \Norm{\cdot}_{p} \)
    & \lpsupportnorm{p}{k}, \;\;\( \LpSupportNorm{\primal}{p}{k} \) 
    & \lptopnorm{q}{k},\;\; \( \LpTopNorm{\dual}{q}{k} \) 
    \\
    & no analytic expression 
    & \(\LpTopNorm{\dual}{q}{k}=\bp{ \sum_{l=1}^{k} \module{ \dual_{\tau(l)} }^q }^{\frac{1}{q}} \)
    \\
    \hline
    \( \Norm{\cdot}_{1} \) 
    & \lpsupportnorm{1}{k} 
    & \lptopnorm{\infty}{k} 
    \\
    & $\ell_{1}$-norm
    & $\ell_{\infty}$-norm 
    \\[1mm]
    & \( \LpSupportNorm{\primal}{1}{k} = \Norm{\primal}_{1} \), $\forall k\in\ic{1,d}$
    & \( \LpTopNorm{\dual}{\infty}{k} = \Norm{\dual}_{\infty} \), $\forall k\in\ic{1,d}$
    \\
    \hline 
    \( \Norm{\cdot}_{2} \) 
    & \lpsupportnorm{2}{k} 
    & \lptopnorm{2}{k} 
    \\
    & \( \LpSupportNorm{\primal}{2}{k} \)
      no analytic expression
    & \( \LpTopNorm{\dual}{2}{k} = \sqrt{ \sum_{l=1}^{k} \module{ \dual_{\tau(l)} }^2 } \)
    \\
    & (computation \cite[Prop. 2.1]{Argyriou-Foygel-Srebro:2012})
    & 
    \\[1mm]
    & \( \LpSupportNorm{\primal}{2}{1} = \Norm{\primal}_{1} \)
    & \( \LpTopNorm{\dual}{2}{1}= \Norm{\dual}_{\infty} \)
    \\
    \hline 
    \( \Norm{\cdot}_{\infty} \)
    & \lpsupportnorm{\infty}{k} 
    & \lptopnorm{1}{k} 
    \\
    & \( \LpSupportNorm{\primal}{\infty}{k} =
      \max \na{ \frac{\Norm{\primal}_{1}}{k} , \Norm{\primal}_{\infty} } \) 
    & \( \LpTopNorm{\dual}{1}{k} = \sum_{l=1}^{k} \module{ \dual_{\tau(l)} } \)
    \\[1mm]
    & \( \LpSupportNorm{\primal}{\infty}{1} = \Norm{\primal}_{1} \)
    & \( \LpTopNorm{\dual}{1}{1}= \Norm{\dual}_{\infty} \)
    \\
    \hline\hline
  \end{tabular}
\end{table}

\paragraph{Best convex lower approximation of the \lzeropseudonorm\ on a unit
  ball}
Let $\TripleNorm{\cdot}$ be a source norm on $\RR^{d}$ and \( \varphi : \ic{0,d} \to \barRR \) be a function.
From Theorem~\ref{th:BestConvexLowerApproximations}, the function
\( \LFMr{ \bp{ \SFM{\np{ \varphi\circ\lzero}}{\CouplingCapra} } }
\)
is the tightest closed convex function below \( \varphi \circ
\lzero \) on  the unit ball~\( \TripleNormBall \), defined in~\eqref{eq:triplenorm_unit_sphere}.
We refer the reader to
\cite[Proposition~12]{Chancelier-DeLara:2020_CAPRA_OPTIMIZATION}, where 
several expressions of the function~\( \LFMr{ \bp{ \SFM{\np{
        \varphi\circ\lzero}}{\CouplingCapra} } } \) are provided.
In particular, it is shown that it is the largest closed convex function
below the integer valued function
 \( \CoordinateNorm{\TripleNormBall}{\LocalIndex} 
 \backslash \CoordinateNorm{\TripleNormBall}{\LocalIndex-1}
 \ni \primal \mapsto \varphi\np{\LocalIndex} \) 
          for $l\in\ic{1,d}$,
          and $\primal \in \CoordinateNorm{\TripleNormBall}{0} = \{0\} \mapsto \varphi\np{0}$, the function
          being infinite outside~\( \CoordinateNorm{\TripleNormBall}{d}=
          \TripleNormBall \).
          In dimension~2, this is seen in Figure~\ref{fig:graph1} which depicts
the tightest closed convex function below~\( \lzero \) on the Euclidean unit ball
on~$\RR^2$: the function goes up from zero to the value~1 on the border (sphere) of the lozenge
unit ball~$\TripleNormBall_{1}=\CoordinateNorm{\TripleNormBall}{1}$ of the
$\ell_1$-norm; then, from the lozenge to the value~2 on the border (sphere) of the round
unit ball~$\TripleNormBall_{2}=\CoordinateNorm{\TripleNormBall}{2}$ of the
$\ell_2$-norm; 
the function is discontinuous on the four ``sparse'' points on the
unit Euclidean circle, and takes the value~$+\infty$ outside the unit disk.
\begin{figure}
  \begin{center}
    {\includegraphics[width=5cm]{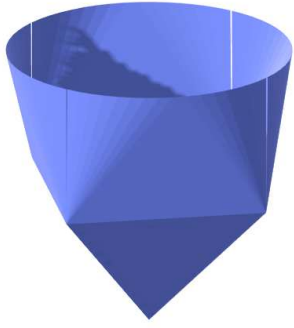}}
  \end{center}
  \caption{\label{fig:graph1}
    Tightest closed convex function below the \lzeropseudonorm\ on the Euclidean unit ball
    on~$\RR^2$}
\end{figure}

On the square unit ball~$\TripleNormBall_{\infty}$ of the $\ell_\infty$-norm, we obtain that
the best convex lower approximation on $\TripleNormBall_{\infty}$ is the
$\ell_{1}$-norm.
Indeed, if $\CouplingCapra$ is the \Capra-coupling~\eqref{eq:coupling_CAPRA}
associated with the $\ell_\infty$-norm, this best approximation is
\( \SFM{ \lzero}{\CouplingCapra\Fenchelcoupling'} \) by
Theorem~\ref{th:BestConvexLowerApproximations}.
Now, using~\cite[Proposition~12]{Chancelier-DeLara:2020_CAPRA_OPTIMIZATION},
we get that 
\( \SFM{ \lzero}{\CouplingCapra\Fenchelcoupling'} = \LFMr{
  \Bp{\sup_{\LocalIndex\in\ic{0,d}} \bc{
      \CoordinateNormDual{\TripleNorm{\cdot}}{\LocalIndex} -\LocalIndex} } } = \LFMr{
  \Bp{\sup_{\LocalIndex\in\ic{0,d}} \bc{
      \LpTopNorm{\cdot}{1}{\LocalIndex} -\LocalIndex} } } \)
(see Table~\ref{tab:Examples}).
As 
\( \sup_{\LocalIndex\in\ic{0,d}} \bc{
      \LpTopNorm{\dual}{1}{\LocalIndex} -\LocalIndex}  
= \sum_{i=1}^{d} \np{1 -  \abs{\dual_{i}}}\mathbf{1}_{\abs{\dual_{i}}\geq 1} \),
we get that 
\[
 \forall \primal \in \RR^d \eqsepv 
 \SFM{ \lzero}{\CouplingCapra\Fenchelcoupling'}\np{\primal} 
= \sum_{i=1}^{d} \sup_{\dual_i \in \RR}
    \bp{\primal_{i}\dual_{i} + \np{1 - \abs{\dual_{i}}}\mathbf{1}_{\abs{\dual_{i}}\geq 1}}
    = 
    \begin{cases}
      \Norm{\primal}_{1} \eqsepv & \primal \in \TripleNormBall_{\infty} \eqfinv \\
      + \infty \eqsepv &\text{otherwise.}
    \end{cases}
  \]

\paragraph{Best norm lower approximation of the \lzeropseudonorm\ on a unit ball}

We compute the expression of the best lower approximation of the
\lzeropseudonorm\ with a norm. Then, we show that for the $\ell_{p}$ source
norm, with $p\in [1,\infty]$, the tightest norm below $\lzero$ on the unit ball
$\TripleNormBall_{p}$ is the $\ell_{1}$-norm.

\begin{proposition}
  \label{prop:best_norm}
  Let $\TripleNorm{\cdot}$ be a source norm on~$\RR^d$,
  with associated sequence
  \( \sequence{\CoordinateNorm{\TripleNorm{\cdot}}{\LocalIndex}}{\LocalIndex\in\ic{1,d}} \)
  of coordinate-$k$ norms and sequence
  \( \sequence{\CoordinateNorm{\TripleNormDual{\cdot}}{\LocalIndex}}{\LocalIndex\in\ic{1,d}} \)
  of dual coordinate-$k$ norms
  and their respective unit balls~\( \sequence{\CoordinateNormDual{\TripleNormBall}{\LocalIndex}}{\LocalIndex\in\ic{1,d}} \).
  Let \( \varphi : \ic{0,d} \to \RR_+ \cup \na{+\infty} \) be a function
  such that \( \varphi\np{\LocalIndex} > \varphi\np{0}=0 \) for all \(
  \LocalIndex\in\ic{1,d} \), and with
  \( \varphi\np{\LocalIndex} <+\infty \) for at least one \(
  \LocalIndex\in\ic{1,d} \). 
  Then, there exists a 
  norm~\( \CoordinateNorm{\TripleNorm{\cdot}}{\varphi} \) on~$\RR^d$, characterized by its dual norm \( \CoordinateNormDual{\TripleNorm{\cdot}}{\varphi} \), which has unit ball
  \( \CoordinateNormDual{\TripleNormBall}{\varphi} = \bigcap_{ \LocalIndex\in\ic{1,d} } \varphi\np{\LocalIndex} \CoordinateNormDual{\TripleNormBall}{\LocalIndex} \),
  and $\CoordinateNorm{\TripleNorm{\cdot}}{\varphi} = \supportFunction_{ \CoordinateNormDual{\TripleNormBall}{\varphi} }$,
  where by convention $+\infty \CoordinateNormDual{\TripleNormBall}{\LocalIndex} = \RR^{d}$. The norm~\( \CoordinateNorm{\TripleNorm{\cdot}}{\varphi} \) 
  is the tightest norm below \( \varphi \circ
  \lzero \) on the unit ball~\( \TripleNormBall \), that is,
  \begin{equation}
    \underbrace{ \CoordinateNorm{\TripleNorm{\primal}}{\varphi}}%
    _{ \textrm{tightest norm} } 
    \leq \varphi\bp{\lzero\np{\primal}}
    \eqsepv \forall \primal \in \TripleNormBall 
    \eqfinp
  \end{equation}
  %
  
\end{proposition}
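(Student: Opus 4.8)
The plan is to obtain the result as a refinement of the second item of Theorem~\ref{th:BestConvexLowerApproximations}. Observe first that \( \varphi\circ\lzero \) is \(0\)-homogeneous (since \( \lzero \) is) and satisfies \( \bp{\varphi\circ\lzero}\np{0}=\varphi\np{0}=0 \). Taking the normalization function \( \quasinorm=\TripleNorm{\cdot} \) to be the source norm, so that \( \TripleNormBall_{\quasinorm}=\TripleNormBall \) and \( \CouplingCapra \) is the associated \Capra-coupling, Theorem~\ref{th:BestConvexLowerApproximations} tells us that the tightest positively \(1\)-homogeneous closed convex function below \( \varphi\circ\lzero \) on \( \TripleNormBall \) is \( \supportFunction_{ \subdifferential{\CouplingCapra}{\varphi\circ\lzero}\np{0} } \). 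My strategy is then: (i) to compute the \Capra-subdifferential at the origin and show it equals \( \CoordinateNormDual{\TripleNormBall}{\varphi} \); (ii) to check that \( \CoordinateNormDual{\TripleNormBall}{\varphi} \) is a genuine norm unit ball, so that \( \supportFunction_{ \CoordinateNormDual{\TripleNormBall}{\varphi} }=\CoordinateNorm{\TripleNorm{\cdot}}{\varphi} \) is a norm; (iii) to conclude that, being the tightest minorant within the \emph{larger} class of positively \(1\)-homogeneous closed convex functions and being itself a norm, it is \emph{a fortiori} the tightest norm below \( \varphi\circ\lzero \) on \( \TripleNormBall \).

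For step~(i), I would first use~\eqref{eq:Capra-subdifferential=Rockafellar-Moreau-subdifferential1} of Proposition~\ref{prop:capra_subdiff} to replace the \Capra-subdifferential by the Rockafellar--Moreau subdifferential, \( \subdifferential{\CouplingCapra}{\varphi\circ\lzero}\np{0}=\subdifferential{}{\np{\varphi\circ\lzero \UppPlus \delta_{\TripleNormBall}}}\np{0} \). Writing out the subgradient inequality at the origin (where the function vanishes) gives \( \subdifferential{}{\np{\varphi\circ\lzero \UppPlus \delta_{\TripleNormBall}}}\np{0}=\bset{\dual\in\RR^d}{\proscal{\primal}{\dual}\leq\varphi\bp{\lzero\np{\primal}}\eqsepv\forall\primal\in\TripleNormBall} \). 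To evaluate this set I would partition the vectors \( \primal\in\TripleNormBall \) according to their support \( K \): on the group with support exactly \( K \) one has \( \varphi\bp{\lzero\np{\primal}}=\varphi\np{\cardinal{K}} \), so the constraints of that group reduce to \( \sup\bset{\proscal{\primal}{\dual}}{\primal\in\TripleNormBall,\,\mathrm{supp}\np{\primal}=K}\leq\varphi\np{\cardinal{K}} \). Since the vectors with support exactly \( K \) are dense in \( \FlatRR_{K}\cap\TripleNormBall \) and \( \proscal{\cdot}{\dual} \) is continuous, this supremum equals \( \TripleNorm{\dual_{K}}_{K,\star} \); and because \( \FlatRR_{K'}\subset\FlatRR_{K} \) whenever \( K'\subset K \), one has \( \sup_{\cardinal{K}=\LocalIndex}\TripleNorm{\dual_{K}}_{K,\star}=\sup_{\cardinal{K}\leq\LocalIndex}\TripleNorm{\dual_{K}}_{K,\star}=\CoordinateNormDual{\TripleNorm{\dual}}{\LocalIndex} \). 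Grouping the constraints by the common value \( \LocalIndex=\cardinal{K} \) therefore collapses them to \( \CoordinateNormDual{\TripleNorm{\dual}}{\LocalIndex}\leq\varphi\np{\LocalIndex} \) for every \( \LocalIndex\in\ic{1,d} \), that is, \( \dual\in\bigcap_{\LocalIndex\in\ic{1,d}}\varphi\np{\LocalIndex}\CoordinateNormDual{\TripleNormBall}{\LocalIndex}=\CoordinateNormDual{\TripleNormBall}{\varphi} \), with \( \varphi\np{\LocalIndex}=+\infty \) yielding the vacuous factor \( \RR^d \) as per the stated convention.

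For step~(ii), I would verify that \( \CoordinateNormDual{\TripleNormBall}{\varphi}=\bigcap_{\LocalIndex\in\ic{1,d}}\varphi\np{\LocalIndex}\CoordinateNormDual{\TripleNormBall}{\LocalIndex} \) satisfies the five properties characterizing the unit ball of a norm. It is convex, symmetric and closed as an intersection of scaled norm balls (and copies of \( \RR^d \)); it is bounded because the hypothesis \( \varphi\np{\LocalIndex}<+\infty \) for at least one \( \LocalIndex \) provides one bounded factor \( \varphi\np{\LocalIndex}\CoordinateNormDual{\TripleNormBall}{\LocalIndex} \); and it is a neighborhood of the origin because \( \varphi\np{\LocalIndex}>0 \) for every \( \LocalIndex \) makes each finite factor a neighborhood of \(0\), a property preserved by finite intersection. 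Hence \( \CoordinateNormDual{\TripleNormBall}{\varphi} \) is the unit ball of a norm \( \CoordinateNormDual{\TripleNorm{\cdot}}{\varphi} \), whose dual norm is exactly \( \CoordinateNorm{\TripleNorm{\cdot}}{\varphi}=\supportFunction_{ \CoordinateNormDual{\TripleNormBall}{\varphi} } \). Combining with step~(i), the tightest positively \(1\)-homogeneous closed convex minorant furnished by Theorem~\ref{th:BestConvexLowerApproximations} is precisely this norm.

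Step~(iii) is then immediate: norms form a subclass of positively \(1\)-homogeneous closed convex functions, so any norm \( N \) with \( N\leq\varphi\circ\lzero \) on \( \TripleNormBall \) is dominated by the tightest minorant over the larger class, namely \( \CoordinateNorm{\TripleNorm{\cdot}}{\varphi} \); since the latter is itself a norm lying below \( \varphi\circ\lzero \) on \( \TripleNormBall \), it is the largest such norm, which yields the displayed inequality. I expect the genuine difficulty to be concentrated in step~(i), specifically in the two passages where the geometry of supports enters: the density argument identifying the exactly-\(K\)-support supremum with the restriction-dual norm \( \TripleNorm{\dual_{K}}_{K,\star} \), and the monotonicity in \( K \) that collapses \( \sup_{\cardinal{K}\leq\LocalIndex} \) to the dual coordinate-\(\LocalIndex\) norm \( \CoordinateNormDual{\TripleNorm{\dual}}{\LocalIndex} \); the bookkeeping of the \( +\infty \) convention and of the behavior on the boundary of \( \TripleNormBall \) must also be handled with care.
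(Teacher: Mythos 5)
Your proof is correct and follows essentially the same route as the paper's: item~2 of Theorem~\ref{th:BestConvexLowerApproximations} applied to the $0$-homogeneous function $\varphi\circ\lzero$ with $\quasinorm=\TripleNorm{\cdot}$, identification of $\subdifferential{\CouplingCapra}{\np{\varphi\circ\lzero}}\np{0}$ with $\CoordinateNormDual{\TripleNormBall}{\varphi}$, a check that this set is a norm unit ball, and the a fortiori passage from the class of positively $1$-homogeneous closed convex minorants to the subclass of norms. The only difference is that the paper obtains the subdifferential identity by citing \cite[Proposition~14]{Chancelier-DeLara:2020_CAPRA_OPTIMIZATION}, whereas you rederive it directly --- and correctly --- via the subgradient inequality at the origin and the support-by-support reduction to the dual coordinate-$k$ norms.
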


\begin{proof}
  From~\cite[Proposition 14]{Chancelier-DeLara:2020_CAPRA_OPTIMIZATION},
  we have that $\subdifferential{\CouplingCapra}{\np{\varphi\circ\lzero}}\np{0}
  = \CoordinateNormDual{\TripleNormBall}{\varphi}$. With
  Theorem~\ref{th:BestConvexLowerApproximations}, we deduce that
  $\supportFunction_{ \CoordinateNormDual{\TripleNormBall}{\varphi} } =
  \CoordinateNorm{\TripleNorm{\cdot}}{\varphi} $ is the tightest closed convex
  positively 1-homogeneous function below $\varphi\circ\lzero$ on the unit ball
  $\TripleNormBall$. We can easily check that $\supportFunction_{
    \CoordinateNormDual{\TripleNormBall}{\varphi} }$ indeed defines a norm. 
  The norm~\( \CoordinateNorm{\TripleNorm{\cdot}}{\varphi} \)
  was introduced in
  \cite[Proposition~15]{Chancelier-DeLara:2020_CAPRA_OPTIMIZATION}
  under the slightly stronger assumption that
  \( \varphi\np{\LocalIndex} <+\infty \) for all
  \( \LocalIndex\in\ic{1,d} \).
\end{proof}

\begin{proposition} (Application with the $\ell_{p}$ source norm).
  \label{prop:appli_lp_source}
  Let $\TripleNorm{\cdot} = \Norm{\cdot}_{p}$, with $p \in [1, +\infty]$, be the source norm and let \( \varphi : \ic{0,d} \to \RR_+ \cup \na{+\infty} \) with \( \varphi\np{\LocalIndex} > \varphi\np{0}=0 \) for all \(
  \LocalIndex\in\ic{1,d} \) and
  \( \varphi\np{\LocalIndex} <+\infty \) for at least one \(
  \LocalIndex\in\ic{1,d} \). We also assume that either, 
for $p \in ]1, +\infty]$ and $q$ such that $1/p + 1/q = 1$, 
the function \( \ic{1, d} \ni \LocalIndex \mapsto
\varphi\np{\LocalIndex}^q/\LocalIndex \) is nondecreasing, 
or, for $p = 1$ and $q = +\infty$, the function~$\varphi$ is nondecreasing.
  Then, the tightest norm below $\varphi\circ\lzero$ on the unit ball $\TripleNormBall_{p}$ is \( \CoordinateNorm{\TripleNorm{\cdot}}{\varphi} =  \varphi\np{1}\Norm{\cdot}_{1}\). In particular, the $\ell_{1}$-norm is the tightest norm below $\lzero$ on the unit ball $\TripleNormBall_{p}$ of any $\ell_p$-norm, for \( p \in [1,\infty] \).
\end{proposition}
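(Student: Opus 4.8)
The plan is to reduce the statement, via Proposition~\ref{prop:best_norm}, to a single identity between convex sets and then to verify that identity using Table~\ref{tab:Examples}. By Proposition~\ref{prop:best_norm}, the tightest norm below $\varphi\circ\lzero$ on $\TripleNormBall_{p}$ is $\CoordinateNorm{\TripleNorm{\cdot}}{\varphi} = \supportFunction_{\CoordinateNormDual{\TripleNormBall}{\varphi}}$, whose dual unit ball is the intersection $\CoordinateNormDual{\TripleNormBall}{\varphi} = \bigcap_{\LocalIndex\in\ic{1,d}} \varphi\np{\LocalIndex}\,\CoordinateNormDual{\TripleNormBall}{\LocalIndex}$. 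Since the support function of the $\ell_\infty$-ball of radius $\varphi\np{1}$ is exactly $\varphi\np{1}\Norm{\cdot}_{1}$, the whole proposition reduces to the set identity
\[
  \bigcap_{\LocalIndex\in\ic{1,d}} \varphi\np{\LocalIndex}\,\CoordinateNormDual{\TripleNormBall}{\LocalIndex} = \varphi\np{1}\,\TripleNormBall_{\infty} \eqfinp
\]
I would read off from Table~\ref{tab:Examples} that, for the source norm $\Norm{\cdot}_{p}$, the dual coordinate-$\LocalIndex$ norm is the $\ell_q$-top-$\LocalIndex$ norm $\LpTopNorm{\cdot}{q}{\LocalIndex}$, and in particular $\CoordinateNormDual{\TripleNorm{\cdot}}{1} = \LpTopNorm{\cdot}{q}{1} = \Norm{\cdot}_{\infty}$, so that $\CoordinateNormDual{\TripleNormBall}{1} = \TripleNormBall_{\infty}$.

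The inclusion $\subseteq$ is immediate and hypothesis-free: keeping only the term $\LocalIndex = 1$ in the intersection gives $\CoordinateNormDual{\TripleNormBall}{\varphi} \subseteq \varphi\np{1}\CoordinateNormDual{\TripleNormBall}{1} = \varphi\np{1}\TripleNormBall_{\infty}$. The content is in the reverse inclusion, which is where I expect the monotonicity assumption to be the crux. Fix $\dual$ with $\Norm{\dual}_{\infty} \leq \varphi\np{1}$; I must show $\LpTopNorm{\dual}{q}{\LocalIndex} \leq \varphi\np{\LocalIndex}$ for every $\LocalIndex\in\ic{1,d}$. Since $\LpTopNorm{\dual}{q}{\LocalIndex}$ aggregates the $q$-th powers of the $\LocalIndex$ largest $\abs{\dual_{i}}$, each of which is at most $\varphi\np{1}$, one obtains the crude bound $\LpTopNorm{\dual}{q}{\LocalIndex} \leq \LocalIndex^{1/q}\varphi\np{1}$; and the requirement $\LocalIndex^{1/q}\varphi\np{1} \leq \varphi\np{\LocalIndex}$ is, after raising to the power $q$ and dividing by $\LocalIndex$, exactly $\varphi\np{1}^{q} \leq \varphi\np{\LocalIndex}^{q}/\LocalIndex$. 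This is precisely the nondecreasingness of $\LocalIndex \mapsto \varphi\np{\LocalIndex}^{q}/\LocalIndex$ compared against its value at $\LocalIndex = 1$. Matching this combinatorial bound to the stated hypothesis is the single place where the assumption is genuinely used, and it is the main (though short) obstacle.

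The endpoint $p = 1$, where $q = +\infty$, I would treat separately, since then Table~\ref{tab:Examples} gives $\CoordinateNormDual{\TripleNorm{\cdot}}{\LocalIndex} = \Norm{\cdot}_{\infty}$ for every $\LocalIndex$, so the intersection collapses to $\bigcap_{\LocalIndex\in\ic{1,d}} \varphi\np{\LocalIndex}\TripleNormBall_{\infty} = \bp{\min_{\LocalIndex\in\ic{1,d}} \varphi\np{\LocalIndex}}\TripleNormBall_{\infty}$, and the hypothesis that $\varphi$ is nondecreasing yields $\min_{\LocalIndex} \varphi\np{\LocalIndex} = \varphi\np{1}$, giving the same set identity. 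Finally, for the stated special case $\varphi = \mathrm{id}$ I would check the hypothesis directly: $\LocalIndex \mapsto \LocalIndex^{q}/\LocalIndex = \LocalIndex^{q-1}$ is nondecreasing because $q \geq 1$ (and $\mathrm{id}$ is nondecreasing for $p = 1$), so $\CoordinateNorm{\TripleNorm{\cdot}}{\varphi} = \varphi\np{1}\Norm{\cdot}_{1} = \Norm{\cdot}_{1}$, recovering that the $\ell_{1}$-norm is the tightest norm below $\lzero$ on every $\ell_{p}$-unit ball.
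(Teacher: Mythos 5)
Your proposal is correct, and its overall architecture coincides with the paper's: both reduce to Proposition~\ref{prop:best_norm} plus Table~\ref{tab:Examples}, identify the dual object $\CoordinateNormDual{\TripleNormBall}{\varphi}$ with the $\ell_\infty$-ball of radius $\varphi\np{1}$ (treating $p=1$ separately), and conclude by duality that $\CoordinateNorm{\TripleNorm{\cdot}}{\varphi}=\varphi\np{1}\Norm{\cdot}_{1}$. The one substantive difference lies in how the monotonicity hypothesis is deployed for $p>1$. The paper works with the formula $\CoordinateNormDual{\TripleNorm{\dual}}{\varphi}=\sup_{\LocalIndex\in\ic{1,d}}\LpTopNorm{\dual}{q}{\LocalIndex}/\varphi\np{\LocalIndex}$ and proves the stronger intermediate fact that the whole sequence $\LocalIndex\mapsto\LpTopNorm{\dual}{q}{\LocalIndex}/\varphi\np{\LocalIndex}$ is nonincreasing, via the averaging inequality $\module{\dual_{\tau(\LocalIndex+1)}}^{q}\leq\frac{1}{\LocalIndex}\sum_{l=1}^{\LocalIndex}\module{\dual_{\tau(l)}}^{q}$ combined with the consecutive-index comparison $\varphi\np{\LocalIndex}^{q}/\LocalIndex\leq\varphi\np{\LocalIndex+1}^{q}/\np{\LocalIndex+1}$. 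You instead pass to the equivalent ball-inclusion formulation and use only the crude bound $\LpTopNorm{\dual}{q}{\LocalIndex}\leq\LocalIndex^{1/q}\Norm{\dual}_{\infty}$ together with the single comparison of the hypothesis at index~$1$ against index~$\LocalIndex$. Your route is slightly more elementary and makes transparent that only a weaker consequence of the hypothesis (domination of the $\LocalIndex=1$ term) is needed for this conclusion; the paper's route yields the sharper structural statement that the supremum defining $\CoordinateNormDual{\TripleNorm{\dual}}{\varphi}$ is attained monotonically at $\LocalIndex=1$. Both are complete; your handling of the $p=1$ endpoint and of the specialization $\varphi=\mathrm{id}$ matches the paper's.
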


\begin{proof}
  From Proposition~\ref{prop:best_norm}, the tightest norm below
  $\varphi\circ\lzero$ on the unit ball $\TripleNormBall_{p}$ is $
  \CoordinateNorm{\TripleNorm{\cdot}}{\varphi}$ and, for all $\dual \in
  \RR^{d}$, we have that \( \CoordinateNormDual{\TripleNorm{\dual}}{\varphi}
    =     \sup_{ \LocalIndex\in\ic{1,d} }
    \frac{\CoordinateNormDual{\TripleNorm{\dual}}{\LocalIndex}}%
    { \varphi\np{\LocalIndex} } \)   
(see \cite[Proposition~15]{Chancelier-DeLara:2020_CAPRA_OPTIMIZATION}).
%
  The source norm is \( \Norm{\cdot}_{p} \). If $p = 1$, then from
  Table~\ref{tab:Examples},
  \(\CoordinateNormDual{\TripleNorm{\dual}}{\LocalIndex} =
  \LpTopNorm{\dual}{\infty}{\LocalIndex} = \Norm{\dual}_{\infty} \). We deduce
  that
  \( \CoordinateNormDual{\TripleNorm{\dual}}{\varphi} = \sup_{
    \LocalIndex\in\ic{1,d} } \frac{\Norm{\dual}_{\infty}}%
  { \varphi\np{\LocalIndex} } = \frac{\Norm{\dual}_{\infty}}{\varphi\np{1}} \)
  as $\varphi$ is nondecreasing. For $p > 1$, from Table~\ref{tab:Examples}, we
  get that
  \(\CoordinateNormDual{\TripleNorm{\dual}}{\LocalIndex} =
  \LpTopNorm{\dual}{q}{\LocalIndex}=\bp{ \sum_{l=1}^{\LocalIndex} \module{
      \dual_{\tau(l)} }^q }^{\frac{1}{q}} \).
  Using that
  $\LocalIndex \mapsto \varphi\np{\LocalIndex}^q/\LocalIndex$ is nondecreasing,
  it can be computed that 
  \( \LocalIndex \mapsto
  \LpTopNorm{\dual}{q}{\LocalIndex}/\varphi\np{\LocalIndex} \) is nonincreasing
  as follows:
  \[
\Bp{\frac{\LpTopNorm{\dual}{q}{\LocalIndex+1}}{\varphi\np{\LocalIndex+1}}}^q
      =
      \frac{\np{\sum_{l=1}^{\LocalIndex} \module{ \dual_{\tau(l)} }^q} 
      +  \module{ \dual_{\tau(\LocalIndex+1)}}^q}{\varphi\np{\LocalIndex+1}^{q}}
\le 
      \np{1+\frac{1}{j}}\frac{\bp{\sum_{l=1}^{\LocalIndex} \module{ \dual_{\tau(l)} }^q} 
      }{\varphi\np{\LocalIndex+1}^{q}}
\le
      \frac{\np{\sum_{l=1}^{\LocalIndex} \module{ \dual_{\tau(l)} }^q} 
      }{\varphi\np{\LocalIndex}^{q}}
      =\Bp{\frac{\LpTopNorm{\dual}{q}{\LocalIndex}}{\varphi\np{\LocalIndex}}}^q
      \eqfinp
    \]
%
  Therefore, \(        \CoordinateNormDual{\TripleNorm{\dual}}{\varphi}
  =  \sup_{ \LocalIndex\in\ic{1,d} }
  \frac{\CoordinateNormDual{\TripleNorm{\dual}}{\LocalIndex}}%
  { \varphi\np{\LocalIndex} } = \frac{\LpTopNorm{\dual}{q}{1}}{\varphi\np{1}} =
  \frac{\Norm{\dual}_{\infty}}{\varphi\np{1}} \), as \( \LpTopNorm{\dual}{q}{1}
  = \module{ \dual_{\tau(1)} }= \Norm{\dual}_{\infty} \). 
  Thus, for $p \in [1, \infty]$, we have obtained
  $\CoordinateNormDual{\TripleNorm{\dual}}{\varphi} =
  \frac{\Norm{\dual}_{\infty}}{\varphi\np{1}}$ from which we deduce that
  \( \CoordinateNorm{\TripleNorm{\cdot}}{\varphi} =
  \varphi\np{1}\Norm{\cdot}_{1}\).
  The last statement of the theorem follows by taking $\varphi$ being the identity mapping. This concludes the proof.
\end{proof}


Fazel~\cite[Theorem 1, \S5.1.4]{Fazel:2002matrix} shows that the best convex lower
approximation of the rank function
on the spectral norm unit ball
is given by the nuclear norm.
By considering singular values, we easily deduce that the best convex lower
approximation of the \lzeropseudonorm\ on the $\ell_{\infty}$ unit ball
is given by the $\ell_{1}$-norm.
Thus, in a sense, Proposition~\ref{prop:appli_lp_source} generalizes the result of
Fazel. 

\section{Conclusion}

In this paper, we have extended the 
\Capra\ couplings and conjugacies introduced
in~\cite{Chancelier-DeLara:2020_CAPRA_OPTIMIZATION,Chancelier-DeLara:2021_ECAPRA_JCA}.
Indeed, they now depend on a given normalization function (i.e. a norm without the requirement for
subadditivity). With this new coupling, we are able to provide expressions for
the best convex (and positively 1-homogeneous) lower approximations of
0-homogeneous functions on the unit ``ball'' of the normalization function. As an
application, we show that the best norm lower approximation of the
\lzeropseudonorm \ on any $\ell_{p}$ unit ball, $p \in [1, +\infty]$, is the
$\ell_{1}$ norm, thus strengthening theoretical grounds for the use of the
$\ell_{1}$ norm as a sparsity-inducing term in optimization problems.
We have also provided the tightest convex lower convex
  approximation of the l0 pseudonorm on the unit ball of any norm.

\appendix

\section{Background on the Fenchel conjugacy}
\label{Background_on_the_Fenchel_conjugacy}

We review concepts and notations related to the Fenchel conjugacy
(we refer the reader to \cite{Rockafellar:1974}).
For any function \( \fonctionuncertain : \RR^d \to \barRR \),
its \emph{epigraph} is \( \epigraph\fonctionuncertain= 
\defset{ \np{\uncertain,t}\in\RR^d\times\RR}%
{\fonctionuncertain\np{\uncertain} \leq t} \),
its \emph{effective domain} is 
\( \dom\fonctionuncertain= 
\defset{\uncertain\in\RR^d}{ \fonctionuncertain\np{\uncertain} <+\infty}
\).
A function \( \fonctionuncertain : \RR^d \to \barRR \)
is said to be
\emph{convex} if its epigraph is a convex set,
\emph{proper} if it never takes the value~$-\infty$
and that \( \dom\fonctionuncertain \not = \emptyset \),
\emph{lower semi continuous (lsc)} if its epigraph is closed,
\emph{closed} if it is either lsc and nowhere having the value $-\infty$,
or is the constant function~$-\infty$ \cite[p.~15]{Rockafellar:1974}.
Closed convex functions are the two constant functions~$-\infty$ and~$+\infty$
united with all proper convex lsc functions.\footnote{%
  In particular, any closed convex function that takes at least one finite value
  is necessarily proper convex~lsc. \label{ft:closed_convex_function}}
It is proved that the Fenchel conjugacy
(indifferently 
\( \fonctionprimal \mapsto \LFM{\fonctionprimal} \)
or
\( \fonctiondual \mapsto \LFMr{\fonctiondual} \) as below)
induces a one-to-one correspondence
between the closed convex functions on~$\RR^d$ and themselves
\cite[Theorem~5]{Rockafellar:1974}.
For any functions \( \fonctionprimal : \RR^d  \to \barRR \)
and \( \fonctiondual : \RR^d \to \barRR \), 
we denote\footnote{%
  In convex analysis, one does not use
  the notation~\( \LFMr{} \)
  in~\eqref{eq:Fenchel_conjugate_reverse} and~\( \LFMbi{} \) in~\eqref{eq:Fenchel_biconjugate},
  but simply~\( \LFM{} \) and~\( ^{\Fenchelcoupling\Fenchelcoupling} \).
  We use~\( \LFMr{} \) and~\( \LFMbi{} \) to be consistent with the notation~\eqref{eq:Fenchel-Moreau_biconjugate} for general conjugacies.}
\begin{subequations}
  \begin{align}
    \LFM{\fonctionprimal}\np{\dual} 
    &= 
      \sup_{\primal \in \RR^d} \Bp{ \proscal{\primal}{\dual} 
      \LowPlus \bp{ -\fonctionprimal\np{\primal} } } 
      \eqsepv \forall \dual \in \RR^d
      \eqfinv
      \label{eq:Fenchel_conjugate}
    \\
    \LFMr{\fonctiondual}\np{\primal} 
    &= 
      \sup_{ \dual \in \DUAL } \Bp{ \proscal{\primal}{\dual} 
      \LowPlus \bp{ -\fonctiondual\np{\dual} } } 
      \eqsepv \forall \primal \in \RR^d
      \eqfinv
      \label{eq:Fenchel_conjugate_reverse}
    \\
    \LFMbi{\fonctionprimal}\np{\primal} 
    &= 
      \sup_{\dual \in \RR^d} \Bp{ \proscal{\primal}{\dual} 
      \LowPlus \bp{ -\LFM{\fonctionprimal}\np{\dual} } } 
      \eqsepv \forall \primal \in \RR^d
      \eqfinp
      \label{eq:Fenchel_biconjugate}
  \end{align}
  The Fenchel biconjugate of a 
  function \( \fonctionprimal : \RR^d  \to \barRR \) satisfies
  \begin{equation}
    \LFMbi{\fonctionprimal}\np{\primal}
    \leq \fonctionprimal\np{\primal}
    \eqsepv \forall \primal \in \RR^d 
    \eqfinp
    \label{eq:Fenchel_biconjugate_leq_fonctionprimal}
  \end{equation}
\end{subequations}
In \cite[p.~214-215]{Rockafellar:1970},
the notions\footnote{%
  See the historical note in \cite[p.~343]{Rockafellar-Wets:1998}.} of (Moreau) subgradient
and of (Rockafellar) subdifferential are defined for a convex function.
Following the definition of the subdifferential
of a function with respect to a duality 
in \cite{Akian-Gaubert-Kolokoltsov:2002},
we define the \emph{(Rockafellar-Moreau) subdifferential} \( \subdifferential{}{\fonctionprimal}\np{\primal} \)
of a function \( \fonctionprimal : \RR^d  \to \barRR \)
at~\( \primal \in \RR^d \) by
  \begin{equation}
    \subdifferential{}{\fonctionprimal}\np{\primal}
    =    \defset{ \dual \in \RR^d }{ %
      \LFM{\fonctionprimal}\np{\dual} 
      = \nscal{\primal}{\dual} 
      \LowPlus \bp{ -\fonctionprimal\np{\primal} } }
    \eqfinp
    \label{eq:Rockafellar-Moreau-subdifferential_a}
  \end{equation}
  When the function~\( \fonctionprimal \) is proper convex and
  \( \primal \in  \dom\fonctionprimal \), we recover the classic definition.

\section{Best convex lower approximations of a function on a subset}
\label{app:best_cvx_general}

We compute the best convex (and convex positively 1-homogeneous) lower
approximations  of a general function on a (not necessarily convex) subset $\Uncertain \subset \RR^{d}$. 

\begin{proposition}
  \label{prop:best_cvx_subset}
  For any subset \( \Uncertain \subset \RR^d \) and any 
  function \( \fonctionprimal : \RR^d \to \barRR \), the best closed convex lower approximation $\hat{\fonctionprimal}$ of $\fonctionprimal$ on $\Uncertain$ is given by 
  $\hat{\fonctionprimal} =
  \LFMbi{ \bp{ \fonctionprimal \UppPlus \delta_{\Uncertain} } }$.
\end{proposition}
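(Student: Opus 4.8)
The plan is to reduce minorization ``on $\Uncertain$'' to minorization on all of $\RR^d$, and then invoke the classical fact that the Fenchel biconjugate is the greatest closed convex minorant. First I would fix the meaning of the statement: the \emph{best closed convex lower approximation of $\fonctionprimal$ on $\Uncertain$} is the greatest closed convex function $g : \RR^d \to \barRR$ with $g\np{\primal} \leq \fonctionprimal\np{\primal}$ for all $\primal \in \Uncertain$. The central observation is that this constraint is equivalent to $g \leq \fonctionprimal \UppPlus \delta_{\Uncertain}$ on all of $\RR^d$. Indeed, writing $h = \fonctionprimal \UppPlus \delta_{\Uncertain}$ and using the Moreau upper addition, we have $h\np{\primal} = \fonctionprimal\np{\primal}$ for $\primal \in \Uncertain$ (since $\delta_{\Uncertain}\np{\primal}=0$ there) and $h\np{\primal} = +\infty$ for $\primal \notin \Uncertain$ (since $\delta_{\Uncertain}\np{\primal} = +\infty$ and $t \UppPlus \np{+\infty} = +\infty$ for every $t \in \barRR$, including $t = -\infty$). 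Hence $g \leq \fonctionprimal$ on $\Uncertain$ if and only if $g \leq h$ on $\Uncertain$ and, automatically, $g \leq +\infty = h$ off $\Uncertain$; that is, if and only if $g \leq h$ everywhere.

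Second, I would invoke the characterization of the Fenchel biconjugate recalled in Appendix~\ref{Background_on_the_Fenchel_conjugacy}: for any $h : \RR^d \to \barRR$, the biconjugate $\LFMbi{h}$ is closed convex (being a Fenchel conjugate via~\eqref{eq:Fenchel_biconjugate}) and is the greatest closed convex function below $h$. The inequality $\LFMbi{h} \leq h$ is~\eqref{eq:Fenchel_biconjugate_leq_fonctionprimal}, and maximality follows from order-reversal of the Fenchel conjugacy applied twice: any closed convex $g \leq h$ satisfies $\LFMbi{g} \leq \LFMbi{h}$, while $g = \LFMbi{g}$ by the one-to-one correspondence between closed convex functions and themselves (\cite[Theorem~5]{Rockafellar:1974}), whence $g \leq \LFMbi{h}$.

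Combining the two steps, the greatest closed convex function below $\fonctionprimal$ on $\Uncertain$ coincides with the greatest closed convex function below $h = \fonctionprimal \UppPlus \delta_{\Uncertain}$ on $\RR^d$, which is precisely $\LFMbi{\bp{\fonctionprimal \UppPlus \delta_{\Uncertain}}}$, giving $\hat{\fonctionprimal} = \LFMbi{\bp{\fonctionprimal \UppPlus \delta_{\Uncertain}}}$ as claimed.

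I expect the only delicate point to be the bookkeeping of the Moreau upper addition when $\fonctionprimal$ takes the value $-\infty$ at points outside $\Uncertain$: there the convention $\np{-\infty} \UppPlus \np{+\infty} = +\infty$ is exactly what forces $h \equiv +\infty$ off $\Uncertain$, so that the constraint off $\Uncertain$ becomes vacuous; had one used the lower addition $\LowPlus$ instead, this reduction would fail. One should also note that $\hat{\fonctionprimal}$ may be the constant function~$-\infty$ (for instance when no proper closed convex minorant exists on $\Uncertain$), which is still a closed convex function in the sense of Appendix~\ref{Background_on_the_Fenchel_conjugacy}, so the statement holds without any properness caveat.
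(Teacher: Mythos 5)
Your proof is correct and follows essentially the same route as the paper's: both reduce minorization on $\Uncertain$ to global minorization of $\fonctionprimal \UppPlus \delta_{\Uncertain}$ and then exploit the fact that the Fenchel biconjugate is the greatest closed convex minorant. The only difference is in the maximality step, where the paper detours through the auxiliary function $\fonctionprimalbis \UppPlus \delta_{\closedconvexhull\Uncertain}$ for a competitor $\fonctionprimalbis$, while you argue directly from the order-preservation of biconjugation together with $\fonctionprimalbis = \LFMbi{\fonctionprimalbis}$ for closed convex $\fonctionprimalbis$ --- a clean shortcut that is equally valid.
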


\begin{proof}
  For any subset \( \Uncertain \subset \RR^d \) and any 
  function \( \fonctionprimal : \RR^d \to \barRR \), 
  we define the set of functions 
  \begin{equation}
    \BestConvexLowerApproximations{\fonctionprimal}{\Uncertain}=
    \bset{ \textrm{closed convex function }
      \fonctionprimalbis: \RR^d \to \barRR}%
    { \fonctionprimalbis\np{\primal}\leq \fonctionprimal\np{\primal}
      \eqsepv \forall \primal \in \Uncertain }.
    \label{eq:BestConvexLowerApproximations}
  \end{equation}
  As the set $\mathcal{F} = \ba{\textrm{function } \fonctionprimal: \RR^{d} \to
    \barRR}$ endowed with the partial order~$\leq$ is a complete lattice, the
  subset $\BestConvexLowerApproximations{\fonctionprimal}{\Uncertain} \subset
  \mathcal{F}$ has a (unique) supremum \( \hat{\fonctionprimal} =
  \bigvee\BestConvexLowerApproximations{\fonctionprimal}{\Uncertain} \). As the
  set of closed convex functions is stable by pointwise supremum, the
  function~\( \hat{\fonctionprimal} : \RR^{d} \to \barRR \) is given, for all
  $\primal \in \Uncertain$, by $\hat{\fonctionprimal}\np{\primal}=
  \sup\bset{\fonctionprimalbis\np{\primal}}%
  {\fonctionprimalbis\in\BestConvexLowerApproximations{\fonctionprimal}{\Uncertain}}$.

  By~\eqref{eq:Fenchel_biconjugate_leq_fonctionprimal},
  it is easily established that $\LFMbi{ \bp{ \fonctionprimal \UppPlus \delta_{\Uncertain} } } \in \BestConvexLowerApproximations{\fonctionprimal}{\Uncertain}$. For any $\fonctionprimalbis \in
  \BestConvexLowerApproximations{\fonctionprimal}{\Uncertain}$, we have
  $\fonctionprimalbis  \UppPlus \delta_{\closedconvexhull\Uncertain} \in \BestConvexLowerApproximations{\fonctionprimal}{\Uncertain}$ and $\fonctionprimalbis  \UppPlus \delta_{\closedconvexhull\Uncertain} 
  \leq \LFMbi{ \bp{ \fonctionprimal \UppPlus \delta_{\Uncertain} } }$,
  as \( \delta_{\closedconvexhull\Uncertain} \leq \delta_{\Uncertain} \)
  where $\closedconvexhull\Uncertain$ denotes the closed convex hull of the set $\Uncertain$.
  As \( \fonctionprimalbis \leq \fonctionprimalbis \UppPlus \delta_{\closedconvexhull\Uncertain} \),
  we conclude that
  \( \hat{\fonctionprimal} =
  \bigvee\BestConvexLowerApproximations{\fonctionprimal}{\Uncertain}
  \leq \LFMbi{ \bp{ \fonctionprimal \UppPlus \delta_{\Uncertain} } }     
  \), hence that \(    \hat{\fonctionprimal} =
  \LFMbi{ \bp{ \fonctionprimal \UppPlus \delta_{\Uncertain} } } \)
  since \( \LFMbi{ \bp{ \fonctionprimal \UppPlus \delta_{\Uncertain} } } \in
  \BestConvexLowerApproximations{\fonctionprimal}{\Uncertain} \).
\end{proof}

\begin{remark}
  We cannot replace $\delta_{\Uncertain}$
  by~$\delta_{\closedconvexhull\Uncertain}$ in the statement of Proposition~\ref{prop:best_cvx_subset}, 
  although $\delta_{\closedconvexhull\Uncertain}$ is involved in the proof. For
  instance, consider $\fonctionprimal = \abs{\cdot}$ being the absolute value
  function defined on $\RR$ and $\Uncertain = ]-\infty, -1] \cup [1,+\infty[$,
  hence $\closedconvexhull\Uncertain = \RR$. We can easily check that $\LFMbi{
    \bp{ \fonctionprimal \UppPlus \delta_{\Uncertain} } }\np{\primal} = \abs{x}$
  if $\primal \in \Uncertain$ and $\LFMbi{ \bp{ \fonctionprimal \UppPlus
      \delta_{\Uncertain} } }\np{\primal} = 1$ if $\primal \in \RR \setminus
  \Uncertain$, that is, $\hat{\fonctionprimal} =\max\na{1, \abs{\cdot}}$. 
  On the other hand, we have $\LFMbi{ \bp{ \fonctionprimal \UppPlus
      \delta_{\closedconvexhull\Uncertain} } } = \LFMbi{ \fonctionprimal } =
  \abs{\cdot}$,
  and therefore $\LFMbi{ \bp{ \fonctionprimal \UppPlus
      \delta_{\closedconvexhull\Uncertain} } } = \abs{\cdot}
  \neq \max\na{1, \abs{\cdot}}=\LFMbi{ \bp{ \fonctionprimal \UppPlus \delta_{\Uncertain} } }$.
\end{remark}

Now, we give the best approximation of a general function $\fonctionprimal: \RR^{d} \to \barRR$ with a closed convex positively 1-homogeneous function.

\begin{proposition}
  For any subset \( \Uncertain \subset \RR^d \) with $0 \in \Uncertain$ and any function \( \fonctionprimal : \RR^d \to \barRR \) such that $\fonctionprimal\np{0} = 0$, 
  the best closed convex, positively 1-homogeneous approximation $\tilde{\fonctionprimal}$ of $\fonctionprimal$ on $\Uncertain$ is given by
  $\tilde{\fonctionprimal} =
  \supportFunction_{ \subdifferential{}{\np{\fonctionprimal \UppPlus \delta_{\Uncertain}}}\np{0} } 
  \eqfinp $
  \label{prop:best_pos_hom_cvx_subset}
\end{proposition}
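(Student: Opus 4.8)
The plan is to reduce the constrained problem to an unconstrained minorization problem and then to exhibit $\supportFunction_{ \subdifferential{}{\np{\fonctionprimal \UppPlus \delta_{\Uncertain}}}\np{0} }$ as the \emph{maximum} of the relevant family by proving two inequalities. First I would write $h = \fonctionprimal \UppPlus \delta_{\Uncertain}$ and observe that a function $g$ satisfies $g \leq \fonctionprimal$ on $\Uncertain$ if and only if $g \leq h$ on all of $\RR^d$: for $\primal \notin \Uncertain$ one has $h\np{\primal} = +\infty$ (by the convention for $\UppPlus$, even when $\fonctionprimal\np{\primal} = -\infty$), so the constraint is vacuous there, while for $\primal \in \Uncertain$ it reads $g\np{\primal} \leq \fonctionprimal\np{\primal}$. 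Hence $\tilde{\fonctionprimal}$ is the largest closed convex positively 1-homogeneous minorant of $h$, and $h\np{0} = \fonctionprimal\np{0} \UppPlus \delta_{\Uncertain}\np{0} = 0$ since $0 \in \Uncertain$ and $\fonctionprimal\np{0}=0$. A pointwise supremum of closed convex positively 1-homogeneous functions is again of this type (closedness and convexity pass to suprema, and $\sup_i g_i\np{\rho \primal} = \rho \sup_i g_i\np{\primal}$ for $\rho > 0$), so a largest element exists; I will identify it directly.

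For the membership direction, I would first note that $\supportFunction_{\subdifferential{}{h}\np{0}}$ is a support function, hence closed convex and positively 1-homogeneous (including the degenerate case $\subdifferential{}{h}\np{0} = \emptyset$, where it is the constant $-\infty$). That it is a minorant of $h$ follows from the definition of the subdifferential: by~\eqref{eq:Rockafellar-Moreau-subdifferential_a}, $\dual \in \subdifferential{}{h}\np{0}$ means $\LFM{h}\np{\dual} = -h\np{0} = 0$, so $\proscal{\primal}{\dual} - h\np{\primal} \leq \LFM{h}\np{\dual} = 0$ for every $\primal$; taking the supremum over $\dual \in \subdifferential{}{h}\np{0}$ yields $\supportFunction_{\subdifferential{}{h}\np{0}} \leq h$.

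The core of the argument is maximality. Let $g$ be any closed convex positively 1-homogeneous function with $g \leq h$. Using the classification of closed convex functions recalled in Appendix~\ref{Background_on_the_Fenchel_conjugacy}, $g$ is either proper, or the constant $-\infty$, or the constant $+\infty$; the last is excluded since $g\np{0} \leq h\np{0} = 0$, and for $g \equiv -\infty$ the bound $g \leq \supportFunction_{\subdifferential{}{h}\np{0}}$ is trivial. For proper $g$, positive 1-homogeneity forces the conjugate to be an indicator, $\LFM{g} = \delta_{D}$, where $D = \defset{\dual \in \RR^d}{\proscal{\primal}{\dual} \leq g\np{\primal} \eqsepv \forall \primal}$ is closed convex and nonempty (nonemptiness because $g$ is proper), and then $g = \LFMbi{g} = \supportFunction_{D}$. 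From $g \leq h$ and the order-reversing property of the Fenchel conjugacy, $\LFM{h} \leq \LFM{g} = \delta_{D}$; combined with $\LFM{h}\np{\dual} \geq \proscal{0}{\dual} - h\np{0} = 0$ (valid for all $\dual$ since $h\np{0}=0$), this gives $\LFM{h} = 0$ on $D$, i.e. $D \subseteq \subdifferential{}{h}\np{0}$. Monotonicity of the support function in its set then yields $g = \supportFunction_{D} \leq \supportFunction_{\subdifferential{}{h}\np{0}}$, which completes the identification $\tilde{\fonctionprimal} = \supportFunction_{\subdifferential{}{h}\np{0}}$.

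The hard part will be the structural step for proper $g$: showing that a proper closed convex positively 1-homogeneous function has an indicator conjugate and hence is the support function of $D = \subdifferential{}{g}\np{0}$. This rests on the homogeneity computation $\LFM{g}\np{\dual} = \sup_{\primal}\sup_{\rho>0}\rho\bp{\proscal{\primal}{\dual} - g\np{\primal}}$, which can only take the values $0$ and $+\infty$; the remaining delicate points are the careful bookkeeping of the Moreau additions and the handling of the improper ($-\infty$, $+\infty$) cases, both of which I have isolated above.
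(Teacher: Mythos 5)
Your proof is correct and follows essentially the same route as the paper's: both reduce to minorizing $\fonctionprimal \UppPlus \delta_{\Uncertain}$, identify the proper closed convex positively 1-homogeneous competitors with support functions $\supportFunction_{\Dual}$ of nonempty closed convex sets, show that the representing set must satisfy $\Dual \subseteq \subdifferential{}{\np{\fonctionprimal \UppPlus \delta_{\Uncertain}}}\np{0}$, and check that the support function of that subdifferential is itself an admissible minorant. The only differences are cosmetic: you derive the support-function representation and the inclusion via the conjugacy (\( \LFM{g}=\delta_{\Dual} \), then \( \LFM{h}\leq\delta_{\Dual} \) together with \( \LFM{h}\geq 0 \)), and you treat the improper cases explicitly, whereas the paper invokes \cite[Theorem 8.24]{Rockafellar-Wets:1998} and unwinds the definition~\eqref{eq:Rockafellar-Moreau-subdifferential_a} of the subdifferential directly.
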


\begin{proof}
  For any subset \( \Uncertain \subset \RR^d \) and any 
  function \( \fonctionprimal : \RR^d \to \barRR \) such that \( \fonctionprimal\np{0}=0 \), 
  we define the set of functions 
  \begin{equation}
    \BestConvexLowerHomogeneous{\fonctionprimal}{\Uncertain}= \bset{ \textrm{closed convex, positively 1-homogeneous function }
      \fonctionprimalbis: \RR^{d} \to \barRR}%
    { \fonctionprimalbis\np{\primal}\leq \fonctionprimal\np{\primal}
      \eqsepv \forall \primal \in \Uncertain }
    \eqfinp 
    \label{eq:BestConvexLowerHomogeneous}
  \end{equation}
  As the set $\mathcal{F} = \ba{\textrm{function } \fonctionprimal: \RR^{d} \to
    \barRR}$ endowed with the partial order~$\leq$ is a complete lattice, the
  subset $\BestConvexLowerHomogeneous{\fonctionprimal}{\Uncertain} \subset
  \mathcal{F}$ has a (unique) supremum \( \tilde{\fonctionprimal} =
  \bigvee\BestConvexLowerHomogeneous{\fonctionprimal}{\Uncertain} \). As the set
  of closed convex, positively 1-homogeneous functions is stable by pointwise
  supremum, the function~\( \tilde{\fonctionprimal} : \RR^{d} \to \barRR \) is
  given, for all $\primal \in \Uncertain$, by
  $\tilde{\fonctionprimal}\np{\primal}=
  \sup\bset{\fonctionprimalbis\np{\primal}}%
  {\fonctionprimalbis\in\BestConvexLowerHomogeneous{\fonctionprimal}{\Uncertain}}$.
  From~\cite[Theorem 8.24]{Rockafellar-Wets:1998}, the proper closed convex positively 1-homogeneous functions can be identified with the support functions of nonempty closed convex subsets of $\RR^{d}$. Thus, we describe the functions of $\BestConvexLowerHomogeneous{\fonctionprimal}{\Uncertain}$ by means of support functions. 
  Let \( \Dual \subset \RR^d \) be a nonempty closed convex set such that
  $\supportFunction_{\Dual}\np{\primal} \leq \fonctionprimal\np{\primal}$, for all $\primal \in \Uncertain$.
  We have
  \begin{align*}
    \dual\in\Dual
    & \implies
      \proscal{\primal}{\dual} \leq \fonctionprimal\np{\primal}
      \eqsepv \forall \primal \in \Uncertain
      \tag{by definition 
      of the support function~$\supportFunction_{\Dual}$} 
    \\
    & \iff
      \proscal{\primal}{\dual} \LowPlus \bp{-\fonctionprimal\np{\primal}}
      \leq
      0=\proscal{0}{\dual} \LowPlus \bp{-\fonctionprimal\np{0}}
      \eqsepv \forall \primal \in \Uncertain
      \intertext{using property of the Moreau lower addition \cite{Moreau:1970},
and the assumption that \( \fonctionprimal\np{0}=0 \)}
    & \iff
      \proscal{\primal}{\dual} \LowPlus \bp{ -\np{\fonctionprimal
      \UppPlus \delta_{\Uncertain}}\np{\primal} }
      \leq
      \proscal{0}{\dual} \LowPlus \bp{ -\np{\fonctionprimal
      \UppPlus \delta_{\Uncertain}}\np{0} }
      \eqsepv \forall \primal \in \RR^d \tag{as $0 \in \Uncertain$} \eqfinv
    \\
    & \iff
      \dual\in\subdifferential{}%
      { \bp{\fonctionprimal \UppPlus \delta_{\Uncertain}} }\np{0}
  \end{align*}
    by definition~\eqref{eq:Rockafellar-Moreau-subdifferential_a} of 
                    the (Rockafellar-Moreau) subdifferential.                   
  Thus, we have obtained that $\Dual \subset \subdifferential{}%
  { \bp{\fonctionprimal \UppPlus \delta_{\Uncertain}} }\np{0}$ from which we deduce the inequality $\supportFunction_{\Dual} \leq \supportFunction_{\subdifferential{}%
    { \np{\fonctionprimal \UppPlus \delta_{\Uncertain}} }\np{0}}$.
We get that $\tilde{\fonctionprimal} \leq \supportFunction_{\subdifferential{}%
    { \np{\fonctionprimal \UppPlus \delta_{\Uncertain}} }\np{0}}$, as the subset 
  $\bset{\supportFunction_{\Dual}, \Dual \subset \RR^{d} \text{ nonempty closed convex}}{\supportFunction_{\Dual}\np{\primal} \leq \fonctionprimal\np{\primal}
    \eqsepv \forall \primal \in \Uncertain}$ is equal
  to~$\BestConvexLowerHomogeneous{\fonctionprimal}{\Uncertain}$
  in~\eqref{eq:BestConvexLowerHomogeneous}. 

  On the other hand, using the previous equivalences, if
  $\dual\in\subdifferential{} { \bp{\fonctionprimal \UppPlus
      \delta_{\Uncertain}} }\np{0}$, we get that $\proscal{\primal}{\dual} \leq
  \fonctionprimal\np{\primal}$ for all $\primal \in \Uncertain$. Therefore, we
  obtain that 
  $\supportFunction_{\subdifferential{}%
    { \np{\fonctionprimal \UppPlus \delta_{\Uncertain}} }\np{0}}\np{\primal} = \sup_{\dual \in \subdifferential{}%
    { \np{\fonctionprimal \UppPlus \delta_{\Uncertain}} }\np{0}} \proscal{\primal}{\dual} \leq \fonctionprimal\np{\primal}$ for all $\primal \in \Uncertain$,
  that is, $\supportFunction_{\subdifferential{}%
    { \np{\fonctionprimal \UppPlus \delta_{\Uncertain}} }\np{0}} \in \BestConvexLowerHomogeneous{\fonctionprimal}{\Uncertain}$. We conclude that $\tilde{\fonctionprimal} =
  \supportFunction_{ \subdifferential{}{\np{\fonctionprimal \UppPlus \delta_{\Uncertain}}}\np{0} }$.
\end{proof}

\newcommand{\noopsort}[1]{} \ifx\undefined\allcaps\def\allcaps#1{#1}\fi

\end{document}